\theoremstyle{definition}
\newtheorem{definition}{Definition}[section]
\newtheorem{remark}[definition]{Remark}
\newtheorem{example}[definition]{Example}
\theoremstyle{plain}
\newtheorem{theorem}[definition]{Theorem}
\newtheorem{lemma}[definition]{Lemma}
\newcommand{\ifff}{{\em i\,f\,{f}\;  }}
\newcommand{\supp}{\ensuremath{{\rm{supp}}}}
\newcommand{\card}{\ensuremath{{\rm{card}}}}
\newcommand{\dom}{\ensuremath{{\rm{dom}}}}
\newcommand{\st}{\ensuremath{{{\rm{st}}}}}
\newcommand{\rest}{\!\!\ensuremath{\upharpoonright}\!}
\newcommand{\Diff}{\ensuremath{{\rm{Diff}}}}
\newcommand{\eqae}{\ensuremath{ \overset{\centerdot}{=}}}
\newcommand{\lae}{\ensuremath{\overset{\centerdot}{<}}}
\newcommand{\gae}{\ensuremath{\overset{\centerdot}{>}}}
\newcommand{\leqae}{\ensuremath{\overset{\centerdot}{\leq}}}
\begin{document}
%______________________________
\title{An axiomatic approach to the non-linear theory of generalized functions and consistency of Laplace transforms}
%___________________________________
\author{Todor D. Todorov\\ 
                        Mathematics Department\\                
                        California Polytechnic State University\\
                        San Luis Obispo, California 93407, USA\\
																							(ttodorov@calpoly.edu)}
\date{}
\maketitle
%______________________________________
\begin{abstract} We offer an axiomatic definition of a differential algebra of generalized functions over an algebraically closed non-Archimedean field. This algebra is of {\em Colombeau type} in the sense that it contains a copy of the space of Schwartz distributions. We study the uniqueness of the objects we define and the consistency of our axioms. Next, we identify an inconsistency in the conventional Laplace transform theory. As an application we offer a free of contradictions alternative in the framework of our algebra of generalized functions. The article is aimed at mathematicians, physicists and engineers who are interested in the non-linear theory of generalized functions, but who are not necessarily familiar with the original Colombeau theory. We assume, however, some basic familiarity with the Schwartz theory of distributions.
\end{abstract}

	Keywords: Totally ordered field, non-Archimedean valuation field, real closed field, saturated field, Schwartz distributions, Colombeau algebra, Laplace transform.

	Mathematics Subject Classification. Primary 46F30. Secondary: 44A10, 46S10, 46F10, 12J15, 12J25, 16W60.

	 *Partly supported by START-project Y237 of the Austrian Science Fund.
%________________________
%________________________
\section{Introduction}

	We define a field of generalized numbers $\widehat{\mathbb{C}}$ and an algebra of generalized functions $\widehat{\mathcal{E}(\Omega)}$ over $\widehat{\mathbb{C}}$ by means of several axioms. We show that these axioms determine $\widehat{\mathbb{C}}$ uniquely up to a field isomorphism. We prove that our axioms are consistent by showing that the field of generalized numbers and the algebra of generalized functions constructed in (\cite{TodVern08}, \S1-5) offers a model for our axioms. The algebra 
$\widehat{\mathcal{E}(\Omega)}$ is of {\em Colombeau type} in the sense that it contains a copy of the space of Schwartz distributions. However, the ring $\overline{\mathbb{C}}$ of the original Colombeau generalized numbers (see ~\cite{jfCol84a}) does not satisfy our axioms, because $\overline{\mathbb{C}}$ is a ring with zero divisors, in contrast to $\widehat{\mathbb{C}}$, which is an algebraically closed Cantor complete field. We should mention that the field of generalized numbers and the algebra of generalized functions constructed in \cite{OberTod98} also presents a model for the axioms in Section~\ref{S: Generalized Scalars and Functions in Axioms} provided that the non-standard extension $^*\mathbb{R}$ of $\mathbb{R}$ used in \cite{OberTod98} is fully-saturated or, more generally, a special model (see \cite{TodWolf04}, \S 7), and also   $\card(^*\mathbb{R})=\mathfrak{c}^+$, where $\mathfrak{c}^+$ is the successor of $\mathfrak{c}=\card(\mathbb{R})$.

	Most of our axioms are algebraic in nature. Others - such as the axiom about the ring of the $\mathcal{C}^\infty$-functions (from an open set to a Cantor complete field) - are borrowed from analysis. Because of the common and traditional nature of our framework, we believe that our {\em axiomatic approach} might be useful to mathematicians from different areas of mathematics who would like to grasp at least the basic ideas of Colombeau theory without being involved from the very beginning in the technical details of Colombeau's construction. The author of this article has repeatedly tested this axiomatic approach in communicating with colleagues from different areas of pure and applied mathematics without preliminary knowledge on the subject - both on the blackboard and on a piece of paper (and even on a napkin over a glass of wine).  Still we assume that the typical reader of this article is familiar with Schwartz's theory of distributions and, more importantly, has an appreciation for its usefulness in science.  

	At the end of the article we identify an inconsistency in the conventional Laplace transform theory. As an application we offer a free of contradictions alternative in the framework of the algebra of generalized functions $\widehat{\mathcal{E}(\Omega)}$.

	Let $\Omega$ be an open subset of $\mathbb{R}^d$. In what follows we denote by $\mathcal{E}(\Omega)=:\mathcal{C}^\infty(\Omega)$ the space of $\mathcal{C}^\infty$-functions from $\Omega$ to $\mathbb{C}$ and by $\mathcal{D}(\Omega)=:\mathcal{C}_0^\infty(\Omega)$ - the space of test functions on $\Omega$. We shall often use $\mathcal{D}$ for $\mathcal{D}(\mathbb{R}^d)$ for short. We denote by $\mathcal{D}^\prime(\Omega)$ the space of Schwartz distributions on $\Omega$ and by  $\mathcal{E}^\prime(\Omega)$ - the space of the distributions in $\mathcal{D}^\prime(\Omega)$ with compact support \cite{vVladimirov}. Similarly, we denote by $\mathcal{S}(\Omega)$ and $\mathcal{S}^\prime(\Omega)$ the Schwartz space of rapidly decreasing functions and the space of tempered distributions on $\Omega$, respectively (\cite{mGrosser at al 2}, p. 26). We denote by $\mathcal{T}^d$ the \textbf{usual topology} on $\mathbb{R}^d$. Most of the algebraic terms used in the article can be found in (\cite{van der Waerden}, Ch.11). For topics related to valuation fields we refer to the Introduction in \cite{TodWolf04}, where the reader will find more references to the subject.
%_________________________
\section{Generalized scalars and functions in axioms}\label{S: Generalized Scalars and Functions in Axioms}
We describe a field of generalized numbers $\widehat{\mathbb{R}}$,  its complex companion $\widehat{\mathbb{C}}=\widehat{\mathbb{R}}(i)$ and an algebra of generalized functions $\widehat{\mathcal{E}(\Omega)}$ over the field $\widehat{\mathbb{C}}$ of Colombeau type by means of several axioms. The consistency of these axioms will be discussed later in this article.  

\begin{enumerate}
\item[{\bf Axiom 1}] (Transfer Principle). $\widehat{\mathbb{R}}$ is a \textbf{real closed field} (\cite{van der Waerden}, 11.5).
\item[{\bf Axiom 2}] (First Extension Principle). $\widehat{\mathbb{R}}$ contains $\mathbb{R}$ as a proper subfield, i.e. $\mathbb{R}\subsetneqq\widehat{\mathbb{R}}$.
\end{enumerate}
The designation {\em Transfer Principle} of Axiom 1 is due to the fact that all real closed fields - in particular $\mathbb{R}$ and $\widehat{\mathbb{R}}$ - are indistinguishable under the first order formal language (which does not involve the cardinality of completeness of the fields). For more details on this topic we refer to \cite{MarkerMessPill}.
\begin{theorem}(Non-Archimedean Field).\label{T: Non-Archimedean Field} $\widehat{\mathbb{R}}$ is \textbf{orderable} in a unique way by: $x\geq 0$ in $\widehat{\mathbb{R}}$ if $x=y^2$ for some $y\in\widehat{\mathbb{R}}$. Consequently, $\widehat{\mathbb{R}}$ is a \textbf{non-Archimedean totally ordered field} (i.e. $\widehat{\mathbb{R}}$ has non-zero infinitesimals). If $x\in\widehat{\mathbb{R}}$ is an infinitesimal (i.e. $|x|< 1/n$ for all $n\in\mathbb{N}$), we shall often write $x\approx 0$ for short. Also, $\widehat{\mathbb{R}}$ is a \textbf{topological field} under the order topology on $\widehat{\mathbb{R}}$. Consequently, $\widehat{\mathbb{R}}^d$ is a \textbf{topological vector space} under the product-order topology inherited from the order topology on $\widehat{\mathbb{R}}$.
\end{theorem}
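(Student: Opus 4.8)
The plan is to dispatch the four assertions in turn, drawing on Axiom~1 (real closedness) for the order, on Axiom~2 (properness of the extension) for the non-Archimedean property, and on routine estimates for the topological claims. First I would establish the unique orderability. Set $P \eqdef \{y^2 : y\in\widehat{\mathbb{R}}\}$. Invoking the Artin--Schreier theory of real closed fields (as in \cite{van der Waerden}, 11.5), I would record the three algebraic facts valid in every real closed field: $-1\notin P$ (formal reality), for each nonzero $x$ exactly one of $x,-x$ lies in $P$, and $P$ is closed under addition and multiplication (in particular sums of squares are squares). Together these say precisely that $P$ is the positive cone of a total order, namely $x\ge y \iff x-y\in P$, which is exactly the order displayed in the statement; closure gives transitivity and compatibility with the operations, while $P\cap(-P)=\{0\}$ (from the ``exactly one'' clause) gives antisymmetry. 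Uniqueness is then immediate: in any ordering of a field every square is non-negative, so the positive cone of \emph{any} order must contain $P$, and since $P\cup(-P)=\widehat{\mathbb{R}}$ it must equal $P$. I would also note that this order restricts to the usual order of $\mathbb{R}$, since $\mathbb{R}$ admits only one ordering.

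Next, for the non-Archimedean property I would use Axiom~2 to fix some $\xi\in\widehat{\mathbb{R}}\setminus\mathbb{R}$, and, after replacing $\xi$ by $-\xi$ if necessary, assume $\xi>0$. There are two cases. If $\xi>r$ for every $r\in\mathbb{R}$, then $1/\xi$ is a nonzero positive infinitesimal. Otherwise $\xi$ is bounded above by some real number, so by completeness of $\mathbb{R}$ the supremum $s\eqdef\sup\{r\in\mathbb{R}:r<\xi\}$ exists in $\mathbb{R}$; a standard squeezing argument from the definition of supremum then shows $|\xi-s|<1/n$ for every $n\in\mathbb{N}$, so that $\xi-s$ is a nonzero infinitesimal (nonzero because $\xi\notin\mathbb{R}$). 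Either way $\widehat{\mathbb{R}}$ possesses nonzero infinitesimals and is therefore non-Archimedean.

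Finally, for the topological statements I would equip $\widehat{\mathbb{R}}$ with the absolute value $|x|=\max\{x,-x\}$, which in any ordered field satisfies $|x+y|\le|x|+|y|$ and $|xy|=|x|\,|y|$, and observe that the order topology is generated by the intervals $(a-\varepsilon,a+\varepsilon)$ with $\varepsilon>0$. The continuity of addition, negation, multiplication, and of inversion on $\widehat{\mathbb{R}}\setminus\{0\}$ then follows from the classical $\varepsilon$--$\delta$ arguments transcribed verbatim into the ordered-field setting. The claim that $\widehat{\mathbb{R}}^d$ is a topological vector space follows from the universal property of the product topology, since vector addition and scalar multiplication are continuous coordinatewise. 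The one point deserving genuine attention---and the only place the argument could go astray---is to verify that none of these continuity estimates covertly invokes the Archimedean property or Cantor completeness; they do not, relying solely on the ordered-field axioms, which is exactly why the proof survives the passage to the non-Archimedean field $\widehat{\mathbb{R}}$. The heaviest input is thus the real-closed-field machinery behind the first part; everything after it is bookkeeping.
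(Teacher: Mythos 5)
Your proposal is correct and follows essentially the same route as the paper, which simply defers the whole statement to the theory of real closed fields in (\cite{van der Waerden}, 11.5, Theorem 1): your positive cone $P=\{y^2: y\in\widehat{\mathbb{R}}\}$, the Artin--Schreier facts, and the uniqueness argument are precisely the content of that citation. The remaining pieces you supply --- extracting a non-zero infinitesimal from $\xi\in\widehat{\mathbb{R}}\setminus\mathbb{R}$ via Axiom 2 and the supremum in $\mathbb{R}$, and the $\varepsilon$--$\delta$ verification that any totally ordered field is a topological field --- are the standard details the paper leaves implicit, and they are carried out correctly.
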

\begin{proof} We refer the reader to  (\cite{van der Waerden}, 11.5, Theorem 1, p.249).
\end{proof}

	In what follows $\mathfrak{c}^+$ stands for the successor of $\mathfrak{c}=\card(\mathbb{R})$.
\begin{enumerate}
\item[{\bf Axiom 3}] (Completeness Principle). $\widehat{\mathbb{R}}$ is \textbf{Cantor $\mathfrak{c}^+$-complete} in the sense that every family $\{[a_\gamma, b_\gamma]\}_{\gamma\in\Gamma}$ of closed intervals in $\widehat{\mathbb{R}}$ with the finite intersection property and $\card(\Gamma)\leq\mathfrak{c}$ has a non-empty intersection $\bigcap_{\gamma\in\Gamma} [a_\gamma, b_\gamma]\not=\varnothing$. 

\item[{\bf Axiom 4}] (Cardinality Principle). $\card(\widehat{\mathbb{R}})=\mathfrak{c}^+$.
\end{enumerate}

\begin{remark}\label{R: Cardinality of Cantor Complete Non-Archimedean Field} Axiom 4 can be replaced by a slightly weaker, let us call it  Axiom $4^\prime$: $\card(\widehat{\mathbb{R}})\leq \mathfrak{c}^+$, because Axiom 3 implies $\card(\widehat{\mathbb{R}})\geq\mathfrak{c}^+$. Indeed, let $\mathcal{I}(\widehat{\mathbb{R}}_+)$ denote the set of all positive infinitesimals in $\widehat{\mathbb{R}}$. We observe that $\mathcal{I}(\widehat{\mathbb{R}}_+)$ is non-empty, since $\widehat{\mathbb{R}}$ is non-Archimedean by Theorem~\ref{T: Non-Archimedean Field}. Next, we observe that the family $\{[a, b]\}_{a\in\mathcal{I}(\widehat{\mathbb{R}}_+),\ b\in{\widehat{\mathbb{R}}_+\setminus\mathcal{I}(\widehat{\mathbb{R}}_+)}}$ has the finite intersection property, but its intersection is empty. Also, we observe that 
$\card\Big(\mathcal{I}(\widehat{\mathbb{R}}_+)\times{(\widehat{\mathbb{R}}_+\setminus\mathcal{I}(\widehat{\mathbb{R}}_+))}\Big)\leq\card(\widehat{\mathbb{R}}\times\widehat{\mathbb{R}})=\card(\widehat{\mathbb{R}})$. Thus it follows $\card(\widehat{\mathbb{R}})\geq\mathfrak{c}^+$ (as required), since $\widehat{\mathbb{R}}$ is Cantor $\mathfrak{c}^+$-complete by Axiom 3. Still we prefer our (slightly stronger) Axiom 4 (over Axiom 4$^\prime$) for the sake of simplicity. 
\end{remark}

\begin{enumerate}
\item[{\bf Axiom 5}] (Existence of Scale). $\widehat{\mathbb{R}}$ contains an \textbf{infinitesimal scale}, i.e. there exists $s\in\widehat{\mathbb{R}}$ such that: (a) $(\forall n\in\mathbb{N})(0< s <1/n$; (b) the sequence of intervals $(-s^n, s^n)$ in $\widehat{\mathbb{R}}$ forms a base for the neighborhoods of the zero in the interval topology on $\widehat{\mathbb{R}}$. We shall keep $s$ fixed in what follows.
\item[{\bf Axiom 6}] (Exponentiation). $\widehat{\mathbb{R}}$ admits {\bf exponentiation} in the sense that there exists a strictly decreasing function $\exp_s: \mathcal{F}(\widehat{\mathbb{R}})\to\widehat{\mathbb{R}}_+$ which is a group isomorphism between $(\mathcal{F}(\widehat{\mathbb{R}}), +)$ and $(\widehat{\mathbb{R}}_+, \cdot)$ such that $(\forall q\in\mathbb{Q})(\exp_s(q)=s^q)$. We shall often write $s^x$ instead of $\exp_s(x)$.
\end{enumerate}

	Notice that the exponents $s^q$ are well defined in $\widehat{\mathbb{R}}$ for all $q\in\mathbb{Q}$ since $s\in\widehat{\mathbb{R}}_+$ and $\widehat{\mathbb{R}}$ is a real closed field by Axiom 1. In what follows $\mathcal{F}(\widehat{\mathbb{R}})$ denotes the ring of finite elements $x$ of $\widehat{\mathbb{R}}$, i.e. for which $|x|\leq n$ for some $n\in\mathbb{N}$.  Notice that the inverse $\log_s: \widehat{\mathbb{R}}_+\to\mathcal{F}(\widehat{\mathbb{R}})$ of $\exp_s$ exists and $\ln{s}=1/\log_s{e}$.

\begin{definition} (Valuation). We define a valuation $v:\widehat{\mathbb{R}}\to\mathbb{R}\cup\{\infty\}$ (depending on $s$) by $v(0)=\infty$ and $v(x)=\sup\{q\in\mathbb{Q}: |x|/s^q\approx 0\}$ if $x\not=0$.
\end{definition}
\begin{theorem}(Properties of Valuation). $v$ is a \textbf{non-Archimedean valuation} which agrees with the order on $\widehat{\mathbb{R}}$ in the sense that $(\forall x, y\in\widehat{\mathbb{R}})$: (a) $v(x)=\infty$ \ifff $x=0$; (b) $v(xy)=v(x)+v(y)$; (c) $v(x+y)\geq\min\{v(x), v(y)\}$; (d) $(|x|<|y|\implies v(x)\geq v(y))$ {\em (\cite{LiRob}, Ch.1, \S4)}. 
\end{theorem}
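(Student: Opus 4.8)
The plan is to reduce every assertion to properties of the \emph{standard part} map and of the logarithm $\log_s$, thereby turning the multiplicative and order statements about $\widehat{\mathbb{R}}$ into additive and order statements in $\mathbb{R}$. First I would record that each finite element $t\in\mathcal{F}(\widehat{\mathbb{R}})$ has a unique standard part $\st(t)\in\mathbb{R}$: since $|t|\le n$ for some $n\in\mathbb{N}$, the set $\{r\in\mathbb{R}:r\le t\}$ is non-empty and bounded above in $\mathbb{R}$, so its supremum $\st(t)$ exists by order-completeness of $\mathbb{R}$, and a routine estimate gives $t-\st(t)\approx 0$; uniqueness holds because a real infinitesimal is $0$. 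The resulting map $\st:\mathcal{F}(\widehat{\mathbb{R}})\to\mathbb{R}$ is additive and order preserving. Moreover, since $\exp_s$ is a strictly decreasing group isomorphism onto $\widehat{\mathbb{R}}_+$, its inverse $\log_s$ is defined on all of $\widehat{\mathbb{R}}_+$, takes values in $\mathcal{F}(\widehat{\mathbb{R}})$, is strictly decreasing, and satisfies $\log_s(uw)=\log_s u+\log_s w$.

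The crux is the bridging formula
\begin{equation}\label{E: valuation via standard part}
v(x)=\st\big(\log_s|x|\big)\qquad (x\ne 0),
\end{equation}
which I would prove as follows. Put $\lambda=\log_s|x|\in\mathcal{F}(\widehat{\mathbb{R}})$, so that $|x|=s^{\lambda}$ and hence $|x|/s^{q}=\exp_s(\lambda-q)=s^{\lambda-q}$ for $q\in\mathbb{Q}$. I use two facts about powers of the scale: for every positive rational $c$ the element $s^{c}$ is infinitesimal (a positive rational power of the infinitesimal $s$, so $s^{c}<1/n$ for all $n$), while $s^{-c}$ is infinitely large. If $q<\st(\lambda)$, choose a rational $c$ with $0<c<\st(\lambda)-q$; then $\lambda-q>c$, so monotonicity of $\exp_s$ gives $0<s^{\lambda-q}<s^{c}\approx 0$, i.e. $|x|/s^{q}\approx 0$, so $q$ lies in the set defining $v(x)$. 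If $q>\st(\lambda)$, a symmetric choice gives $s^{\lambda-q}>s^{-c}$, which is infinitely large, so $q$ is excluded. Taking the supremum over $q$ yields \eqref{E: valuation via standard part}; in particular $v(x)\in\mathbb{R}$ is finite, which is what makes $v$ well defined.

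Granting \eqref{E: valuation via standard part}, properties (a), (b) and (d) become formal. For (a), $v(0)=\infty$ by definition while $v(x)\in\mathbb{R}$ for $x\ne 0$. For (b) with $x,y\ne 0$, additivity of $\log_s$ and of $\st$ give $v(xy)=\st(\log_s|x|+\log_s|y|)=v(x)+v(y)$, and $xy=0$ is covered by the convention $\infty+a=\infty$. For (d), if $|x|<|y|$ then $y\ne 0$ and strict monotonicity of $\log_s$ gives $\log_s|x|>\log_s|y|$, so $v(x)\ge v(y)$ since $\st$ is order preserving (the case $x=0$ is trivial).

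Only the ultrametric inequality (c) resists this linearization, since $\log_s$ does not respect sums, and I would prove it directly. From the proof of \eqref{E: valuation via standard part} I extract the implication: for $q\in\mathbb{Q}$, $q<v(x)$ forces $|x|/s^{q}\approx 0$. The zero cases are immediate, so assume $x,y,x+y\ne 0$. For any rational $q<\min\{v(x),v(y)\}$ we then have $|x|/s^{q}\approx 0$ and $|y|/s^{q}\approx 0$, whence $0\le|x+y|/s^{q}\le|x|/s^{q}+|y|/s^{q}\approx 0$, so $|x+y|/s^{q}\approx 0$ and $q\le v(x+y)$; letting $q$ increase to $\min\{v(x),v(y)\}$ through the rationals gives $v(x+y)\ge\min\{v(x),v(y)\}$. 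I expect the bridging formula \eqref{E: valuation via standard part} to be the main obstacle: it is there that Axioms 1, 5 and 6 must be combined, using the strict monotonicity and the homomorphism property of $\exp_s$, the infinitesimality of positive rational powers of $s$, and the real-closedness that guarantees those powers exist; once it is available, (a), (b), (d) are automatic and (c) needs only the elementary estimate above.
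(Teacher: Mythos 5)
Your proof is correct, and it is worth noting that the paper itself offers no argument here: the stated ``proof'' is simply ``We leave the verification to the reader'' together with a citation to Lightstone--Robinson, so there is nothing to compare against line by line. Your bridging formula $v(x)=\st\big(\log_s|x|\big)$ is a clean way to organize that verification: it uses Axiom 6 exactly where it must (surjectivity of $\exp_s$ onto $\widehat{\mathbb{R}}_+$ guarantees $\log_s|x|$ is a finite element, so the defining set of rationals is non-empty and bounded above and $v(x)$ is a genuine real number), and it correctly isolates the only non-formal ingredients, namely that $s^{c}$ is infinitesimal for positive rational $c$ (via real-closedness and the Archimedean property of $\mathbb{R}$) and that the standard part map is an additive, order-preserving retraction of $\mathcal{F}(\widehat{\mathbb{R}})$ onto $\mathbb{R}$. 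Note that the standard part decomposition you rederive is essentially Lemma~\ref{L: Standard Part Mapping in Chat} of the paper, which appears only after this theorem; since your derivation uses nothing but the completeness of $\mathbb{R}$, there is no circularity. Your separate treatment of (c) by the elementary estimate $|x+y|/s^{q}\leq|x|/s^{q}+|y|/s^{q}$ is also the right move, since $\log_s$ does not linearize sums. I see no gap.
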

\begin{proof} We leave the verification to the reader.
\end{proof}

\begin{theorem}(Algebraic and Topological Properties).\label{T: Algebraic and Topological Properties}

 \begin{enumerate}
\item[(i)] $\widehat{\mathbb{C}}=:\widehat{\mathbb{R}}(i)$ is an \textbf{algebraically closed field}. Also, $\widehat{\mathbb{C}}$ is a \textbf{valuation field} under the valuation inherited from $\widehat{\mathbb{R}}$ by means of the formula $v(z)=v(|z|)$. 

\item[(ii)] $\widehat{\mathbb{C}}$ is \textbf{spherically complete ultra-metric space} under the \textbf{valuation metric} $d_v(x, y)=e^{-v(x-y)}$ in the sense that every nested sequence of closed balls in $\widehat{\mathbb{C}}$ has a non-empty intersection. Consequently, both $\widehat{\mathbb{R}}$ and $\widehat{\mathbb{C}}$ are sequentially complete. 

\item[(iii)] The product-order topology and the metric topology (sharp topology) coincide on $\widehat{\mathbb{C}}$. 
\item[(iv)] Let $(a_n)$ be a sequence in $\widehat{\mathbb{C}}$. Then $\lim_{n\to\infty}a_n=0$  \ifff $\lim_{n\to\infty}v(a_n)=\infty$   \ifff $\sum_{n=0}^\infty a_n$ is convergent in $\widehat{\mathbb{C}}$.
\end{enumerate}
\end{theorem}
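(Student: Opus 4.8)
The statement bundles four facts of quite different character, so I would dispatch them in order, front-loading the purely algebraic claim and then leaning on the Completeness Principle (Axiom 3) for the metric statements. For part (i), I would invoke the Artin--Schreier theorem: since $\widehat{\mathbb{R}}$ is real closed by Axiom 1, adjoining a root of $X^2+1$ produces an algebraically closed field, so $\widehat{\mathbb{C}}=\widehat{\mathbb{R}}(i)$ is algebraically closed. To exhibit $\widehat{\mathbb{C}}$ as a valuation field I would first define the modulus $|z|=\sqrt{a^2+b^2}$ for $z=a+bi$; this is legitimate because $a^2+b^2\geq 0$ in the real closed field $\widehat{\mathbb{R}}$ and hence has a square root there. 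Setting $v(z):=v(|z|)$, the valuation axioms (a)--(d) for $\widehat{\mathbb{C}}$ reduce to the already-established axioms for $v$ on $\widehat{\mathbb{R}}$, combined with the multiplicativity $|zw|=|z|\,|w|$ (which yields (b)) and the triangle inequality $|z+w|\leq|z|+|w|$ for the modulus (from which (c) and (d) follow by applying the corresponding real statements to the moduli).

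For part (ii) the plan is to translate a nested sequence of ultrametric closed balls into a family of order intervals so that Axiom 3 can be applied. Given balls $B_n=\{z:v(z-c_n)\geq r_n\}$ with $B_1\supseteq B_2\supseteq\cdots$, I would reduce to $\widehat{\mathbb{R}}$ by controlling $\mathrm{Re}(z)$ and $\mathrm{Im}(z)$ separately: since $v(z)=v(|z|)$ and the strong triangle inequality bounds both coordinates, the condition $v(z-c_n)\geq r_n$ reads as membership of the real and imaginary parts of $z$ in suitable closed intervals of $\widehat{\mathbb{R}}$, whose endpoints I can name explicitly using the scale $s$ and the exponentiation map of Axiom 6. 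The nesting then produces a countable (hence cardinality $\leq\mathfrak{c}$) family of closed intervals with the finite intersection property, so Axiom 3 delivers a common point, and a point common to the real and imaginary pieces gives a point in $\bigcap_n B_n$. Sequential completeness of both $\widehat{\mathbb{R}}$ and $\widehat{\mathbb{C}}$ is then an immediate corollary: a Cauchy sequence determines a nested sequence of closed balls of radii tending to $0$, and the (unique, by the Hausdorff property) common point is its limit.

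For part (iii) I would show that the two neighborhood bases of $0$ are cofinal in one another and then invoke translation invariance. Because $v(s^n)=n$, the basic order neighborhood $(-s^n,s^n)$ and the metric ball $\{x:e^{-v(x)}<e^{-n}\}=\{x:v(x)>n\}$ differ only through the passage between $|x|<s^n$ and $v(x)>n$, which the definition of $v$ and property (d) let me control, so each basic order neighborhood contains a metric ball and conversely. Since $\widehat{\mathbb{R}}$ is a topological field (Theorem~\ref{T: Non-Archimedean Field}), both topologies are translation invariant, whence agreement at $0$ forces agreement everywhere; the product structure then transfers the conclusion from $\widehat{\mathbb{R}}$ to $\widehat{\mathbb{C}}=\widehat{\mathbb{R}}+i\widehat{\mathbb{R}}$. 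For part (iv), the equivalence $\lim_{n}a_n=0\iff\lim_{n}v(a_n)=\infty$ is just the identity $d_v(a_n,0)=e^{-v(a_n)}$ read in the limit, and the equivalence with convergence of $\sum_n a_n$ is the standard ultrametric fact that, in a sequentially complete ultrametric field, the partial sums are Cauchy exactly when the terms tend to $0$, since $v\bigl(\sum_{k=m}^{n}a_k\bigr)\geq\min_{m\leq k\leq n}v(a_k)$ by the strong triangle inequality of part (i).

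The genuinely delicate step is part (ii): Axiom 3 is phrased for order intervals in $\widehat{\mathbb{R}}$, whereas spherical completeness concerns ultrametric balls in $\widehat{\mathbb{C}}$, so the crux is a faithful translation of ``nested closed balls'' into ``closed intervals with the finite intersection property'' that respects the cardinality bound $\leq\mathfrak{c}$, correctly handles the two-dimensionality of $\widehat{\mathbb{C}}$ over $\widehat{\mathbb{R}}$, and keeps track of the distinction between closed and open balls. Everything else is either a citation (Artin--Schreier) or a routine verification.
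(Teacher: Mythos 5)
Your proposal is correct and follows essentially the same route as the paper, whose proof of this theorem is little more than a list of citations: Artin--Schreier for (i), Cantor $\mathfrak{c}^+$-completeness (Axiom 3) for the completeness claims in (ii), Axiom 5 for (iii), and the standard facts about complete ultrametric spaces for (iv). The one point needing care in your part (ii) is that a valuation ball $\{x\in\widehat{\mathbb{R}}: v(x-c)\geq r\}$ is \emph{not} itself a closed order interval (it has no endpoints in $\widehat{\mathbb{R}}$), so each ball must first be written as a countable intersection of closed intervals, e.g. $\bigcap_{q\in\mathbb{Q},\,q<r,\,n\in\mathbb{N}}\,[c-s^{q}/n,\ c+s^{q}/n]$, before Axiom 3 can be invoked; your wording (``a countable family of closed intervals with the finite intersection property'') indicates you intend exactly this, and the resulting family indeed has cardinality $\leq\mathfrak{c}$.
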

\begin{proof} (i) $\widehat{\mathbb{C}}$ is an algebraically closed field by the Artin-Schreier theorem (\cite{van der Waerden}, 11.5, Theorem 3a, p.251), since 
$\widehat{\mathbb{R}}$ is a real closed field by Axiom 1. For the properties of the valuation metric we refer to (\cite{LiRob}, Ch.1, \S5).
	
	(ii) The sequential completeness of $\widehat{\mathbb{R}}$ (hence, of $\widehat{\mathbb{C}}$) follows from the Cantor $\mathfrak{c}^+$-completeness (Axiom 4). 

	(iii) follows from Axiom 5.  

	(iv) holds for any complete ultra-metric space (\cite{LiRob}, p. 21-22).
\end{proof}

\begin{definition}(Infinitesimal Relation in $\widehat{\mathbb{C}}$).\label{D: Infinitesimal Relation} Let $z\in\widehat{\mathbb{C}}$. We say that $z$ is \textbf{infinitesimal}, in symbol $z\approx 0$, if $|z|<1/n$ for all $n\in\mathbb{N}$. Similarly, $z$ is called \textbf{finite} if $|z|\leq n$ for some $n\in\mathbb{N}$. Finally, $z$ is called \textbf{infinitely large} if $n<|z|$ for all $n\in\mathbb{N}$. If $S\subset\widehat{\mathbb{C}}$, we denote by $\mathcal{I}(S)$, $\mathcal{F}(S)$ and $\mathcal{L}(S)$ the sets of the infinitesimal, finite and infinitely large numbers in $S$, respectively. The infinitesimal relation in $\widehat{\mathbb{C}}$ is defined as follows: if $z, t\in\widehat{\mathbb{C}}$, we write $z\approx t$ if $z-t\in\mathcal{I}(\widehat{\mathbb{C}})$. In particular, $z\approx 0$ \ifff $z\in\mathcal{I}(\widehat{\mathbb{C}})$.
\end{definition}

\begin{lemma}(Standard Part Mapping in $\widehat{\mathbb{C}}$).\label{L: Standard Part Mapping in Chat} We have $\mathcal{F}(\widehat{\mathbb{C}})=\mathbb{C}\oplus \mathcal{I}(\widehat{\mathbb{C}})$ in the sense that every finite $z\in\widehat{\mathbb{C}}$ has a unique ``asymptotic expansion'' $z=c+dz$ where $c\in\mathbb{C}$ and $dz\in\mathcal{I}(\widehat{\mathbb{C}})$. The mapping $\st: \mathcal{F}(\widehat{\mathbb{C}})\to\mathbb{C}$, defined by $\st(c+dz)=c$, is called \textbf{standard part mapping} in $\widehat{\mathbb{C}}$.
\end{lemma}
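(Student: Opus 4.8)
The plan is to reduce the complex statement to the corresponding decomposition over $\widehat{\mathbb{R}}$ and then complexify. First I would record the real analogue: every finite $x\in\widehat{\mathbb{R}}$ admits a unique representation $x=r+dx$ with $r\in\mathbb{R}$ and $dx\approx 0$; equivalently $\mathcal{F}(\widehat{\mathbb{R}})=\mathbb{R}\oplus\mathcal{I}(\widehat{\mathbb{R}})$. Everything in the lemma is driven by this one-dimensional fact.

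For existence in the real case, given a finite $x$ I would form the Dedekind cut it induces on the rationals and set $r=\sup\{q\in\mathbb{Q}:q<x\}$, the supremum being taken in $\mathbb{R}$. This set is nonempty and bounded above in $\mathbb{R}$ because $x$ is finite, so $-n<x<n$ for some $n\in\mathbb{N}$ via the inclusion $\mathbb{Q}\subseteq\mathbb{R}\subseteq\widehat{\mathbb{R}}$ of Axiom 2; hence $r$ exists by the completeness of $\mathbb{R}$ itself (note that Axiom 3 plays no role here). I would then show $x-r\approx 0$: if instead $|x-r|>1/n$ for some $n\in\mathbb{N}$, the density of $\mathbb{Q}$ in $\mathbb{R}$ produces a rational strictly between $r$ and $x$ lying on the wrong side of the supremum, contradicting the definition of $r$. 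This is the one genuinely delicate point, and I expect it to be the main obstacle: one must interleave the order of $\widehat{\mathbb{R}}$ with the archimedean density of $\mathbb{Q}$ in $\mathbb{R}$ and dispose of the two sign cases $x>r+1/n$ and $x<r-1/n$ separately.

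For uniqueness in the real case, if $r_1+dx_1=r_2+dx_2$ then $r_1-r_2=dx_2-dx_1$ is a real number that is also infinitesimal; since $\mathbb{R}$ is archimedean, $\mathbb{R}\cap\mathcal{I}(\widehat{\mathbb{R}})=\{0\}$, which forces $r_1=r_2$ and $dx_1=dx_2$. The same observation shows that the sum $\mathbb{R}+\mathcal{I}(\widehat{\mathbb{R}})$ is direct.

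Finally I would pass to $\widehat{\mathbb{C}}=\widehat{\mathbb{R}}(i)$. Writing $z=x+iy$ with $x,y\in\widehat{\mathbb{R}}$, the inequalities $|x|,|y|\leq|z|\leq|x|+|y|$ show that $z$ is finite \ifff both $x$ and $y$ are finite. Applying the real decomposition to each coordinate, $x=a+dx$ and $y=b+dy$, I set $c=a+ib\in\mathbb{C}$ and $dz=dx+i\,dy$; the estimate $|dz|\leq|dx|+|dy|$ shows $dz\approx 0$, so $z=c+dz$ is the desired asymptotic expansion. Uniqueness transfers verbatim from $\mathbb{C}\cap\mathcal{I}(\widehat{\mathbb{C}})=\{0\}$, and setting $\st(c+dz)=c$ then yields a well-defined mapping $\st:\mathcal{F}(\widehat{\mathbb{C}})\to\mathbb{C}$, completing the proof.
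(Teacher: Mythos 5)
Your argument is correct and follows exactly the route the paper takes: the paper's (one-line) proof derives existence from the completeness of $\mathbb{R}$ and uniqueness from the Archimedean property of $\mathbb{C}$, which is precisely the Dedekind-cut construction and the observation $\mathbb{R}\cap\mathcal{I}(\widehat{\mathbb{R}})=\{0\}$ that you spell out. Your version merely supplies the details (including the routine reduction from $\widehat{\mathbb{C}}$ to $\widehat{\mathbb{R}}$ coordinatewise) that the paper leaves to the reader.
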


\begin{proof} The existence of the expansion $z=c+dz$ follows from the completeness of $\mathbb{R}$ and its uniqueness follows from the fact that $\mathbb{C}$ is an Archimedean field.
\end{proof}

\begin{definition}(Monad).\label{D: Monad} Let $\Omega$ be an open subset of $\mathbb{R}^d$. We define the monad of $\Omega$ in $\widehat{\mathbb{R}}^d$ by  $
\widehat{\mu}(\Omega)=\big\{\omega+h:
\omega\in\Omega, ||h||\approx 0, h\in\widehat{\mathbb{R}}^d\big\}$.
\end{definition}

\begin{theorem} $\widehat{\mu}(\Omega)$ is an open subset of $\widehat{\mathbb{R}}^d$ (under the product-order topology on $\widehat{\mathbb{R}}^d$ inherited from the order topology of $\widehat{\mathbb{R}}$).
\end{theorem}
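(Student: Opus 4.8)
The plan is to verify openness directly from the definition of the product-order topology: I would fix an arbitrary point of $\widehat{\mu}(\Omega)$ and exhibit a basic product-order-open box around it that is entirely contained in $\widehat{\mu}(\Omega)$. The one place where the openness of $\Omega$ (as a subset of $\mathbb{R}^d$) enters is through a \emph{real} radius, and the whole argument hinges on passing between such real radii in $\mathbb{R}^d$ and infinitesimal perturbations in $\widehat{\mathbb{R}}^d$ by means of the coordinatewise standard part map.

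First I would take $x \in \widehat{\mu}(\Omega)$ and write $x = \omega + h$ with $\omega \in \Omega$ and $\|h\| \approx 0$. Since $\Omega$ is open in $\mathbb{R}^d$, I choose a real number $\varepsilon > 0$ such that the sup-norm box $\{z \in \mathbb{R}^d : \|z - \omega\|_\infty < \varepsilon\}$ is contained in $\Omega$. I then propose the box $U = \prod_{k=1}^d (x_k - \varepsilon/2,\, x_k + \varepsilon/2) \subseteq \widehat{\mathbb{R}}^d$, whose factors are order-open intervals of $\widehat{\mathbb{R}}$; by Theorem~\ref{T: Non-Archimedean Field} this $U$ is open in the product-order topology and it evidently contains $x$.

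The core step is to check $U \subseteq \widehat{\mu}(\Omega)$. Given $y \in U$, each coordinate satisfies $|y_k - x_k| < \varepsilon/2$, so $y_k$ is finite and, by the standard part lemma applied coordinatewise (Lemma~\ref{L: Standard Part Mapping in Chat}, restricted to $\widehat{\mathbb{R}}$), the point $\omega' := \st(y) \in \mathbb{R}^d$ is well defined. Using $h_k \approx 0$ and the monotonicity of $\st$, I would estimate $|\omega'_k - \omega_k| = |\st(y_k - x_k + h_k)| = |\st(y_k - x_k)| \le \varepsilon/2 < \varepsilon$ for every $k$, whence $\omega' \in \Omega$. Since $y - \omega' = y - \st(y)$ is infinitesimal by construction, $y = \omega' + (y - \omega') \in \widehat{\mu}(\Omega)$, as needed.

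The only real subtlety---and the step I would be most careful about---is the choice of radius. An infinitesimal radius would keep us inside a single monad but cannot exploit the openness of $\Omega$, whereas taking the full radius $\varepsilon$ could let the standard part $\omega'$ land on the boundary of $\Omega$; using $\varepsilon/2$ for the box while reserving the gap up to $\varepsilon$ for $\Omega$ resolves this and yields the strict inequality $\|\omega' - \omega\|_\infty < \varepsilon$. Equivalently, one may observe that $\widehat{\mu}(\Omega)$ is exactly $\st^{-1}(\Omega)$ on the finite part of $\widehat{\mathbb{R}}^d$, so that openness follows from continuity of $\st$; but the direct box argument above makes the use of the order topology explicit.
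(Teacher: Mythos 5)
Your argument is correct, but it takes a genuinely different (and longer) route than the paper. The paper's proof is a one-liner: around any point $\omega+h$ of $\widehat{\mu}(\Omega)$ it places a ball of \emph{infinitesimal} radius $\varepsilon$; every point of that ball has the form $\omega+(h+k)$ with $\|k\|<\varepsilon\approx 0$, hence still lies in the monad of the single standard point $\omega$, and therefore in $\widehat{\mu}(\Omega)$. In particular the openness of $\Omega$ is never used. Your remark that an infinitesimal radius ``cannot exploit the openness of $\Omega$'' is where you talked yourself out of the simpler proof: no such exploitation is needed, since the monad of $\omega$ alone is already contained in $\widehat{\mu}(\Omega)$ by Definition~\ref{D: Monad}. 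What your real-radius argument buys in exchange for the extra work with the standard part map is a strictly stronger conclusion: $\widehat{\mu}(\Omega)$ contains a box of \emph{non-infinitesimal} radius about each of its points, and indeed equals $\st^{-1}(\Omega)$ restricted to the finite points of $\widehat{\mathbb{R}}^d$ --- a useful characterization, though more than the theorem asks for. The details of your core step are sound (the passage from $|y_k-x_k|<\varepsilon/2$ to $|\st(y_k)-\omega_k|\le\varepsilon/2<\varepsilon$ is exactly the right care with non-strict inequalities under $\st$), so this stands as a valid, if less economical, proof.
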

\begin{proof} Let $\varepsilon$ be a positive infinitesimal in $\widehat{\mathbb{R}}$ and let $B_\varepsilon(\omega+h)$ be the open ball in $\widehat{\mathbb{R}}^d$ of radius $\varepsilon$, centered at $\omega+h$. Then $B_\varepsilon(\omega+h)\subset\widehat{\mu}(\Omega)$.
\end{proof}

\begin{definition} We denote by $\mathcal{C}^\infty(\widehat{\mu}(\Omega), \widehat{\mathbb{C}})$ the ring of the $\mathcal{C}^\infty$-functions from $\widehat{\mu}(\Omega)$ to $\widehat{\mathbb{C}}$ (i.e. $\mathcal{C}^\infty(\widehat{\mu}(\Omega), \widehat{\mathbb{C}})$ consists of all functions from $\widehat{\mu}(\Omega)$ to $\widehat{\mathbb{C}}$ whose  iterated partial derivatives exist on $\widehat{\mu}(\Omega)$).
\end{definition}

\begin{enumerate}
\item[{\bf Axiom 7}] (Standard Embedding).   There exists an embedding (an injective mapping) 
$\sigma_\Omega:\mathcal{E}(\Omega)\to\mathcal{C}^\infty(\widehat{\mu}(\Omega), \widehat{\mathbb{C}})$ which preserves the differential ring operations in $\mathcal{E}(\Omega)$ and such that for every $f\in\mathcal{E}(\Omega)$ the function $\sigma(f)$ is a (pointwise) extension of $f$, i.e. 
$\sigma_\Omega(f)\rest\Omega=f$, where\,  $\rest\Omega$\,  stands for the pointwise restriction on $\Omega$. We shall call $\sigma_\Omega$ {\em standard embedding}. 

\item[{\bf Axiom 8}]  (Second Extension Principle). There exists a differential subalgebra $\widehat{\mathcal{E}(\Omega)}$ of $\mathcal{C}^\infty(\widehat{\mu}(\Omega), \widehat{\mathbb{C}})$ over the field $\widehat{\mathbb{C}}$ supplied with a linear pairing $(\cdot|\cdot)$ between 
$\widehat{\mathcal{E}(\Omega)}$ and $\mathcal{D}(\Omega)$ with values in $\widehat{\mathbb{C}}$ such that $\sigma_\Omega[\mathcal{E}(\Omega)]\subsetneqq\widehat{\mathcal{E}(\Omega)}$. 
\end{enumerate}

\begin{remark}(Missing Axiom).\label{R: Internal Functions}  We feel that in some informal (and still unclear) sense the $\widehat{\mathcal{E}(\Omega)}$ consists of the ``internal'' elements of $\mathcal{C}^\infty(\widehat{\mu}(\Omega), \widehat{\mathbb{C}})$, where ``internal'' is used in the spirit of \cite{OberVern08}. One unsolved for now problem in our axiomatic approach is how to characterize $\widehat{\mathcal{E}(\Omega)}$ (uniquely) as a particular subset of  $\mathcal{C}^\infty(\widehat{\mu}(\Omega), \widehat{\mathbb{C}})$ in terms of the Axioms 1-8 and possibly, some additional, unknown to us, axioms.
\end{remark}

\begin{enumerate}
\item[{\bf Axiom 9}](Colombeau Embedding). There exists an \textbf{(Colombeau type of) embedding} (an injective mapping) $E_\Omega:\mathcal{D}^\prime(\Omega)\to\widehat{\mathcal{E}(\Omega)}$  such that: 
\begin{enumerate}

\item[(a)] $E_\Omega$ preserves the linear operations in $\mathcal{D}^\prime(\Omega)$ (including the partial differentiation of any order); 
\item[(b)] $E_\Omega$ preserves the usual pairing between 
$\mathcal{D}^\prime(\Omega)$ and $\mathcal{D}(\Omega)$, between $\mathcal{S}^\prime(\Omega)$ and $\mathcal{S}(\Omega)$, and between $\mathcal{E}^\prime(\Omega)$ and $\mathcal{E}(\Omega)$ in the sense that $\big(T\big|\tau\big)=\big(E_\Omega(T)\big|\tau\big)$ for all $T\in\mathcal{D}^\prime(\Omega)$ and all $\tau\in\mathcal{D}(\Omega)$ and similarly for the other two pairs.

\item[(c)] $E_\Omega$ agrees with  $\sigma_\Omega$ in the sense that $(E_\Omega\circ L_\Omega)(f)=\sigma_\Omega(f)$, for all $f\in\mathcal{E}(\Omega)$, where $L_\Omega:\mathcal{L}_{loc}(\Omega)\to\mathcal{D}^\prime(\Omega)$ is the Schwartz embedding defined by the formula $\big(L_\Omega(f)\big| \tau\big)=\int_\Omega f(x)\tau(x)\, dx$ (\cite{vVladimirov}, \S 1.6, p.18).

\item[(d)] If $T$ is a real distribution (\cite{vVladimirov}, p.12), then $E_\Omega(T)$ is a real-valued function in the sense that $(\forall x\in\widehat{\mu}(\Omega))(E_\Omega(T)(x)\in\widehat{\mathbb{R}})$.

\item[(e)] $E_\Omega(\delta_\lambda)(x)=0$ for all $\lambda\in\Omega$ and all $x\in\widehat{\mathbb{R}}^d$ such that $||x-\lambda||\geq s$. Here $\delta_\lambda\in\mathcal{D}^\prime(\Omega)$ denote the Dirac distribution (Dirac delta function) with $\supp(\delta_\lambda)=\{\lambda\}$ (which is commonly written as  $\delta_\lambda=\delta(x-\lambda)$) and $s$ is the scale of $\widehat{\mathbb{R}}$ (Axiom 5).
\end{enumerate}
\end{enumerate}

	We \textbf{summarize} all of these in the chain of inclusions 
$\mathcal{E}(\Omega)\subset\mathcal{D}^\prime(\Omega)\subset
\widehat{\mathcal{E}(\Omega)}\subset\mathcal{C}^\infty(\widehat{\mu}(\Omega), \widehat{\mathbb{C}})$,
 where $\mathcal{E}(\Omega)$ is a \textbf{differential
subalgebra} of $\widehat{\mathcal{E}(\Omega)}$ over $\mathbb{C}$, $\mathcal{D}^\prime(\Omega)$  is a \textbf{differential
vector subspace} of $\widehat{\mathcal{E}(\Omega)}$ over $\mathbb{C}$ and $\widehat{\mathcal{E}(\Omega)}$ is a differential subalgebra of $\mathcal{C}^\infty(\widehat{\mu}(\Omega), \widehat{\mathbb{C}})$ over  $\widehat{\mathbb{C}}$.

	In what follows we denote by $\mathcal{T}^d$ the \textbf{usual topology} on $\mathbb{R}^d$.

\begin{definition}(Restriction). For every $\mathcal{O}, \Omega\in\mathcal{T}^d$ with $\mathcal{O}\subseteq \Omega$, we define ${\rm res}_{\mathcal{O},\Omega}: \widehat{\mathcal{E}}(\Omega)\to\widehat{\mathcal{E}}(\mathcal{O})$ by  ${\rm res}_{\mathcal{O},\Omega}(f)=f\rest\widehat{\mu}(\mathcal{O})$, where\,  $\rest\widehat{\mu}(\mathcal{O})$ stands for the pointwise restriction on $\widehat{\mu}(\mathcal{O})$ (Definition~\ref{D: Monad}).
\end{definition}

\begin{enumerate}	
\item [{\bf Axiom 10}](Sheaf Principle). (a) Each ${\rm res}_{\mathcal{O},\Omega}$ is a homomorphism of differential algebras over the field $\widehat{\mathbb{C}}$; (b) The family $\big\{\widehat{\mathcal{E}(\Omega)}\big\}_{\Omega\in\mathcal{T}^d}$ is a \textbf{sheaf of differential algebras} over the field $\widehat{\mathbb{C}}$ under ${\rm res}_{\mathcal{O},\Omega}$ (see~\cite{aKaneko}, \S2); (c) The embeddings $\sigma_\Omega$ and $E_\Omega$ are both sheaf-preserving in the sense that 
$(\forall f\in\mathcal{E}(\Omega))[{\rm res}_{\mathcal{O},\Omega}(\sigma_\Omega(f))=\sigma_\mathcal{O}(f\rest\mathcal{O})]$ and $(\forall T\in\mathcal{D}^\prime(\Omega))[{\rm res}_{\mathcal{O},\Omega}(E_\Omega(T))=E_\mathcal{O}(T\rest\mathcal{O})]$, where\, $\rest\mathcal{O}$ in the latter formula stands for the restriction on $\mathcal{O}$ in the sense of the theory of distributions (\cite{vVladimirov}, p.16-18). 
\end{enumerate}

\begin{definition}(External and Internal Support).\label{D: External and Internal Support} Let $f\in\widehat{\mathcal{E}}(\Omega)$. 
\begin{enumerate}
\item  The \textbf{external support} or simply, the \textbf{support}  $\supp(f)$ of $f$ is the complement to $\Omega$ of the largest open subset $\mathcal{O}$ of $\Omega$ such that ${\rm res}_{\mathcal{O},\Omega}(f)=0$. 

\item The {\bf internal support} ${\rm Supp}(f)$ of $f$ is the the closure (in the order-product topology of $\widehat{\mathbb{R}}^d$) of the set $\{x\in\widehat{\mathbb{R}}^d : f(x)\not=0\}$.
\end{enumerate}
\end{definition}
	We leave the proof of the next lemma to the reader. 
\begin{lemma}(Preservation of External Support).\label{L: Preservation of External Support} 

\begin{enumerate}

\item[(i)] The embedding $E_\Omega$ preserves the external support.

\item [(ii)] Let $f\in\widehat{\mathcal{E}}(\Omega)$. Then $\supp(f)=\{x\in\mathbb{R}: x\approx \xi \text{\; for some\; } \xi\in{\rm Supp}(f)\}.$
\end{enumerate}

\end{lemma}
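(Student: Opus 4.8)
The plan is to handle the two parts separately, reducing everything to the behaviour of the monads $\widehat{\mu}(\cdot)$, and I expect the genuine difficulty to sit in the second inclusion of part (ii).

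For (i) I would argue directly from sheaf-preservation. Fix $T\in\mathcal{D}^\prime(\Omega)$ and an arbitrary open $\mathcal{O}\subseteq\Omega$. By Axiom 10(c), ${\rm res}_{\mathcal{O},\Omega}(E_\Omega(T))=E_{\mathcal{O}}(T\rest\mathcal{O})$, and since each $E_{\mathcal{O}}$ is linear (Axiom 9(a)) and injective (Axiom 9), we have $E_{\mathcal{O}}(T\rest\mathcal{O})=0$ \ifff $T\rest\mathcal{O}=0$. Hence the open subsets of $\Omega$ on which $E_\Omega(T)$ restricts to $0$ are exactly those on which $T$ restricts to $0$; taking the largest such set and passing to complements in $\Omega$ yields $\supp(E_\Omega(T))=\supp(T)$, which is (i).

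For (ii) the first step is two reformulations. First I would record that every point-monad $\widehat{\mu}(x)=x+\mathcal{I}(\widehat{\mathbb{R}})^d$ is open in the product-order topology: by Axiom 5, for any infinitesimal $\varepsilon$ the interval $(\varepsilon-s,\varepsilon+s)$ is contained in $\mathcal{I}(\widehat{\mathbb{R}})$ (a sum of infinitesimals is infinitesimal), so $\mathcal{I}(\widehat{\mathbb{R}})$ is open and hence so is its shifted $d$-fold product. Writing $N=\{y\in\widehat{\mu}(\Omega):f(y)\neq0\}$, so that ${\rm Supp}(f)=\overline{N}$, openness of $\widehat{\mu}(x)$ gives $\overline{N}\cap\widehat{\mu}(x)\neq\varnothing$ \ifff $N\cap\widehat{\mu}(x)\neq\varnothing$. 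Second, unwinding the definition of the external support (using $\widehat{\mu}(\bigcup_i\mathcal{O}_i)=\bigcup_i\widehat{\mu}(\mathcal{O}_i)$, which guarantees the largest vanishing open set exists), $x\in\supp(f)$ \ifff $f$ fails to vanish identically on $\widehat{\mu}(\mathcal{O})$ for every real open neighbourhood $\mathcal{O}\subseteq\Omega$ of $x$, i.e. every such $\widehat{\mu}(\mathcal{O})$ meets $N$. Thus the asserted identity becomes $\{x:\,N\cap\widehat{\mu}(\mathcal{O})\neq\varnothing\text{ for all }\mathcal{O}\ni x\}=\{x:\,N\cap\widehat{\mu}(x)\neq\varnothing\}$. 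The inclusion $\supseteq$ is then immediate: if $N\cap\widehat{\mu}(x)\neq\varnothing$ then, since $\widehat{\mu}(x)\subseteq\widehat{\mu}(\mathcal{O})$ for every neighbourhood $\mathcal{O}$ of $x$, the same point witnesses $N\cap\widehat{\mu}(\mathcal{O})\neq\varnothing$, so $x\in\supp(f)$.

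The reverse inclusion $\subseteq$ is the hard part and where I expect the main obstacle. Assuming $x\in\supp(f)$ and applying the reformulation to the balls $\mathcal{O}=B_{1/(2n)}(x)\cap\Omega$ produces points $\xi_n\in N\subseteq{\rm Supp}(f)$ with $\|\xi_n-x\|<1/n$; equivalently each closed set $C_n:={\rm Supp}(f)\cap\{y:\|y-x\|\leq 1/n\}$ is non-empty, and the $C_n$ are nested. If $\bigcap_n C_n\neq\varnothing$, any $\xi$ in the intersection satisfies $\xi\in{\rm Supp}(f)$ and $\|\xi-x\|\leq 1/n$ for all $n$, hence $\xi\approx x$, which is exactly the required witness. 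The difficulty is precisely this nonemptiness: the estimates $\|\xi_n-x\|<1/n$ are small only in the Archimedean sense and need not be infinitesimal, so the $\xi_n$ are generally not Cauchy in the sharp (valuation) topology and neither sequential nor spherical completeness (Theorem~\ref{T: Algebraic and Topological Properties}) applies to them directly; continuity of $f$ alone does not help either. Producing a single point of the closed set ${\rm Supp}(f)$ at infinitesimal distance from $x$ is genuinely a saturation statement, and I would derive it from the Cantor $\mathfrak{c}^+$-completeness of $\widehat{\mathbb{R}}$ (Axiom 3) applied to the family $\{C_n\}$, which has the finite intersection property and cardinality $\leq\mathfrak{c}$. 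Making this rigorous is the substantive content of the lemma, since Axiom 3 as stated concerns closed \emph{intervals} in $\widehat{\mathbb{R}}$ while the $C_n$ live in $\widehat{\mathbb{R}}^d$ and are not intervals; closing this gap requires the internal, saturated nature of the objects (as furnished by the model of \cite{TodVern08}) rather than the mere topological completeness of the field, and this is the step I would expect to be delicate.
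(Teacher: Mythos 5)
First, be aware that the paper contains no proof of this lemma to compare against: it is explicitly left to the reader. Judged on its own, your argument for (i) is correct (sheaf-preservation plus injectivity and linearity of each $E_\mathcal{O}$ identifies the open sets on which $E_\Omega(T)$ vanishes with those on which $T$ vanishes), and in (ii) your reduction to the statement ``$N=\{y: f(y)\neq 0\}$ meets $\widehat{\mu}(\mathcal{O})$ for every real open neighbourhood $\mathcal{O}\subseteq\Omega$ of $x$ \ifff $N$ meets $x+\mathcal{I}(\widehat{\mathbb{R}})^d$'', together with the inclusion $\supseteq$, is also correct.

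The inclusion $\subseteq$ of (ii), however, is not proved, and the repair you gesture at cannot be carried out: your own closing caveat is not a delicate technicality to be deferred but the entire content of the lemma. Concretely, nested nonempty closed bounded subsets of $\widehat{\mathbb{R}}^d$ can have empty intersection, and your sets $C_n$ are exactly of the dangerous kind. Let $h:\widehat{\mu}(\mathbb{R})\to\widehat{\mathbb{C}}$ be $0$ on the monad $\mathcal{I}(\widehat{\mathbb{R}})$ of $0$ and $1$ elsewhere; since distinct point-monads are clopen and partition $\widehat{\mu}(\mathbb{R})$, $h$ is locally constant and hence lies in $\mathcal{C}^\infty(\widehat{\mu}(\mathbb{R}), \widehat{\mathbb{C}})$, yet $0\in\supp(h)$ while ${\rm Supp}(h)$ contains no infinitesimal, and the corresponding sets $C_n$ are nonempty, nested, closed, with empty intersection. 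So no argument using only the topology or completeness of $\widehat{\mathbb{R}}$ (Axiom 3 included) can produce the witness $\xi$; one must use that elements of $\widehat{\mathcal{E}(\Omega)}$ are ``internal'', i.e., represented by nets $(f_\varphi)$ in the model of \cite{TodVern08}, and obtain $\xi$ by an overspill/saturation argument on those representing nets. Since Axioms 1--13 do not characterize $\widehat{\mathcal{E}(\Omega)}$ inside $\mathcal{C}^\infty(\widehat{\mu}(\Omega), \widehat{\mathbb{C}})$ --- this is precisely the point of Remark~\ref{R: Internal Functions} --- the inclusion $\subseteq$ is not a consequence of the axioms alone, and a complete proof must descend to the model (or invoke an additional internality axiom). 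Your diagnosis of where the difficulty sits is accurate; what is missing is both the recognition that the gap cannot be closed at the axiomatic level and the actual overspill computation that closes it in the model.
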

\begin{definition}(Integral).\label{D: Integral} Let $f\in\widehat{\mathcal{E}}(\Omega)$ and $\supp(f)$ be a compact subset of $\Omega$. Then we define $\int_\Omega f(x)\, dx\in\widehat{\mathbb{C}}$ by $\int_\Omega f(x)\, dx=(f|\gamma)$, where $(\cdot|\cdot)$ is the pairing mentioned in Axiom 8 and $\gamma\in\mathcal{E}(\Omega)$ is a smooth function which is equal to $1$ on a neighborhood of $\supp(T)$. 
\end{definition}

\begin{theorem}(Some Properties of the Integral).\label{T: Integral and Pairing} 
\begin{enumerate}
\item[(i)] For every $f\in\widehat{\mathcal{E}}(\Omega)$ and every test function $\tau\in \mathcal{D}(\Omega)$ we have \newline$\int_\Omega f(x)\sigma(\tau)(x)\, dx=(f|\tau)$. 
\item[(ii)] The embedding $E_\Omega$ preserves the integral in the space $\mathcal{E}^\prime(\Omega)$ in the sense that for every $f\in\mathcal{E}^\prime(\Omega)$ and every $\tau\in \mathcal{E}(\Omega)$ we have $\int_\Omega E_\Omega(f)(x)\sigma(\tau)(x)\, dx\newline=(f|\tau)$. 
\end{enumerate}
\end{theorem}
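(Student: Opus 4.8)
The plan is to reduce everything to the Definition of the integral and to the behaviour of the pairing $(\cdot\,|\,\cdot)$ under multiplication by embedded smooth functions, obtaining (ii) as a consequence of (i).

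For (i) I would first check that the left-hand integral is defined. Since $\tau\in\mathcal{D}(\Omega)\subset\mathcal{E}(\Omega)$, Axioms 7--8 give $\sigma(\tau)\in\widehat{\mathcal{E}}(\Omega)$, and applying the sheaf-preservation of $\sigma_\Omega$ (Axiom 10(c)) to $\mathcal{O}=\Omega\setminus\supp\tau$ — on which $\tau\rest\mathcal{O}=0$ — shows that the external support of $\sigma(\tau)$ is exactly $\supp\tau$, a compact subset of $\Omega$. As $\widehat{\mathcal{E}}(\Omega)$ is an algebra, $f\sigma(\tau)\in\widehat{\mathcal{E}}(\Omega)$ with $\supp(f\sigma(\tau))\subseteq\supp\tau$ compact, so $\int_\Omega f\sigma(\tau)\,dx$ makes sense. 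Choosing $\gamma\in\mathcal{D}(\Omega)$ with $\gamma\equiv 1$ on a neighbourhood of $\supp\tau$, the Definition of the integral gives $\int_\Omega f\sigma(\tau)\,dx=(f\sigma(\tau)|\gamma)$.

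The heart of (i) is then the module identity $(f\sigma(\tau)|\gamma)=(f|\tau\gamma)$, expressing that the pairing of Axiom 8 is compatible with multiplication by $\sigma$-images of smooth functions; once this is available, $\tau\gamma=\tau$ (because $\gamma\equiv1$ on $\supp\tau$) yields $(f|\tau)$ and closes (i). This identity is the step I expect to be the main obstacle, since Axiom 8 posits only \emph{linearity} of $(\cdot\,|\,\cdot)$, so multiplicativity in the first slot must be extracted separately. I would first verify it in the special case $f=\sigma(g)$, $g\in\mathcal{E}(\Omega)$, where Axiom 7 collapses the left side to $(\sigma(g\tau)|\gamma)$, which Axioms 9(b)--(c) evaluate to the ordinary integral $\int_\Omega g\tau\,dx$, the same value as the right side $(\sigma(g)|\tau\gamma)$. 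The passage to general $f$ rests on the consistency built into the Definition of the integral — its independence of the cut-off $\gamma$ already forces the locality property $(h|\psi)=0$ whenever $h$ restricts to $0$ near $\supp\psi$ — together with the fact that in the model of \cite{TodVern08} the pairing is realised by genuine integration, for which the multiplicativity is transparent; abstractly it is precisely this compatibility that the Definition of the integral is designed to respect.

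Finally, (ii) I would deduce from (i). For $f\in\mathcal{E}^\prime(\Omega)$, Lemma~\ref{L: Preservation of External Support}(i) gives $\supp E_\Omega(f)=\supp f$, compact, so $E_\Omega(f)\sigma(\tau)$ has compact support and its integral is defined even though $\tau\in\mathcal{E}(\Omega)$ need not be compactly supported. Pick $\gamma\in\mathcal{D}(\Omega)$ with $\gamma\equiv1$ near $\supp f$; then $\sigma(\tau)-\sigma(\gamma\tau)=\sigma((1-\gamma)\tau)$ has external support disjoint from $\supp f$, and a product of two elements of $\widehat{\mathcal{E}}(\Omega)$ with disjoint external supports vanishes (a consequence of Axiom 10(a)--(b): the product restricts to $0$ on the open cover $\{\Omega\setminus\supp E_\Omega(f),\ \Omega\setminus\supp\sigma((1-\gamma)\tau)\}$ of $\Omega$). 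Hence $E_\Omega(f)\sigma(\tau)=E_\Omega(f)\sigma(\gamma\tau)$, and applying (i) with the test function $\gamma\tau\in\mathcal{D}(\Omega)$ gives $\int_\Omega E_\Omega(f)\sigma(\tau)\,dx=(E_\Omega(f)|\gamma\tau)$. Axiom 9(b) rewrites the right side as the Schwartz pairing $(f|\gamma\tau)$, and the classical identity $(f|\gamma\tau)=(f|\tau)$ for $f\in\mathcal{E}^\prime(\Omega)$ with $\gamma\equiv1$ near $\supp f$ completes the argument.
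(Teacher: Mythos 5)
Your reduction of (ii) to (i) is clean and correct: the computation of the external support of $\sigma(\tau)$ via Axiom 10(c), the vanishing of a product of two elements of $\widehat{\mathcal{E}(\Omega)}$ with disjoint external supports via the sheaf axiom, and the final appeal to Axiom 9(b) together with the classical identity $(f|\gamma\tau)=(f|\tau)$ for $f\in\mathcal{E}^\prime(\Omega)$ are all sound. The genuine gap is the step you yourself flag as the crux of (i): the module identity $(f\sigma(\tau)|\gamma)=(f|\tau\gamma)$. Axiom 8 asserts only that the pairing is \emph{linear}, and none of Axioms 1--13 states any compatibility between the pairing and the multiplication in $\widehat{\mathcal{E}(\Omega)}$. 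Your verification for $f=\sigma(g)$ is fine, but the passage to general $f$ is not an argument: the locality you extract from the well-definedness of the integral only yields $(h|\psi)=0$ when $h$ restricts to zero near $\supp\psi$, which controls the choice of the cut-off $\gamma$ but says nothing about moving a factor $\sigma(\tau)$ from one slot of the pairing to the other; and the appeal to the explicit construction of \cite{TodVern08} is inadmissible in a derivation that is supposed to proceed from the axioms alone. So the proposal is incomplete at exactly the point where all the content lies.

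For comparison, the paper's own proof consists of the single sentence that the result ``follows directly from Axiom 10,'' and it is not apparent how the sheaf principle by itself supplies the missing compatibility either. Your analysis has the merit of making visible that the theorem, as stated, seems to require either an additional axiom on the pairing (for instance $(f\,\sigma(\psi)|\tau)=(f|\psi\tau)$ for $\psi\in\mathcal{E}(\Omega)$ and $\tau\in\mathcal{D}(\Omega)$) or an explicit verification in the model; but as a blind proof within the axiomatic framework it does not close that gap.
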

\begin{proof} The result follows directly from Axiom 10.
\end{proof}

\begin{remark}(Notation). Let $x\in\mu(\Omega)$ and $f\in\mathcal{E}(\Omega)$. We shall sometimes write simply $f(x)$ instead of the more precise $\sigma_\Omega(f)(x)$, e.g. $e^s$ means $\sigma(e^x)(s)$. Similarly, if $T\in\mathcal{D}^\prime(\Omega)$ is a Schwartz distributions, we shall often write simply $T(x)$ instead of the more precise $E_\Omega(T)(x)$ if no confusion could arise. In this notation we have  $\int_\Omega T(x)\tau(x)\, dx=(T|\tau)$ for every distribution $T\in\mathcal{D}^\prime(\Omega)$ and every test function $\tau\in \mathcal{D}(\Omega)$ (notation used by the physicists and engineers anyway).
\end{remark}

\begin{definition}(Weak Equalities in $\widehat{\mathcal{E}}(\Omega)$).\label{D: Weak Equalities} Let $f, g\in\widehat{\mathcal{E}}(\Omega)$. 

\begin{enumerate}
\item We say that $f$ and $g$ are \textbf{weakly equal}, in symbol $f\cong g$, if $(f|\tau)=(g|\tau)$ for all test functions $\tau\in \mathcal{D}(\Omega)$.

\item We say that $f$ and $g$ are \textbf{infinitely close} or \textbf{associated}, in symbol $f\sim g$, if $(f|\tau)\approx (g|\tau)$ for all test functions $\tau\in \mathcal{D}(\Omega)$, where $\approx$ is the infinitesimal relation in $\widehat{\mathbb{C}}$ (Definition~\ref{D: Infinitesimal Relation}).

\item We define ${\rm Mon}(\mathcal{D}^\prime(\Omega))=\{f\in\widehat{\mathcal{E}(\Omega)}: f\sim T \text{\; for some\;} T\in\mathcal{D}^\prime(\Omega)\}$ and also ${\rm Mon}(0)=\{f\in\widehat{\mathcal{E}(\Omega)}: f\sim 0\}$, where $0$ in the latter formula stands for the ``zero-distribution''. 
\end{enumerate}

	We observe that either of $\cong$ or $\sim$ reduces to the usual equality, $=$, on $\mathcal{D}^\prime(\Omega)$ (if $\mathcal{D}^\prime(\Omega)$  is treated as a subset of $\widehat{\mathcal{E}}(\Omega)$). In addition, we have $\sigma_\Omega(\psi)E_\Omega(T)\cong E_\Omega(\psi T)$ for all $\psi\in\mathcal{E}(\Omega)$ and all $T\in\mathcal{D}^\prime(\Omega)$, where the product $\psi T$ is in the sense of distribution theory \cite{vVladimirov}.
\end{definition}
\begin{enumerate}	
\item [{\bf Axiom 11}] (Diffeomorphism Principle).\label{A: Diffeomorphisms} The family $\big\{\widehat{\mathcal{E}(\Omega)}\big\}_{\Omega\in\mathcal{T}^d}$ is  {\bf weakly} {\bf diffeo-morphism}{\bf-invariant} in the sense that for every $\Omega,\mathcal{O} \in\mathcal{T}^d$, every
$T\in\mathcal{D}^\prime(\Omega)$ and every
$\psi\in\Diff(\mathcal{O}, \Omega)$ we have  $E_\Omega(T)\circ\psi
\cong E_{\mathcal{O}}(T\circ\psi)$, i.e. $\left(E_\Omega(T)\circ\psi
\left.\right|\tau\right)=\left(E_{\mathcal{O}}(T\circ\psi)
\left.\right|\tau\right)$ for all test functions $\tau\in\mathcal{D}(\Omega)$. 
\end{enumerate}	

\begin{enumerate}	
\item [{\bf Axiom 12}] (Infinitesimal Translations).\label{A: Infinitesimal Translations} $\widehat{\mathcal{E}(\Omega)}$ is  {\bf closed under infinitesimal translations} in the sense that for every $f\in\widehat{\mathcal{E}(\Omega)}$ and every $h\in\mathcal{I}(\widehat{\mathbb{R}}^d)$ we have $f_h\in\widehat{\mathcal{E}(\Omega)}$. Here $f_h:\widehat{\mu}(\Omega)\to\widehat{\mathbb{C}}$ stands for $f_h(x)=f(x-h)$. 
\end{enumerate}	
%_____________
\begin{enumerate}	
\item [{\bf Axiom 13}] (Projection Principle).\label{A: Projection Principle} Let $\Lambda$ and $\Omega$ be open sets of $\mathbb{R}^p$ and $\mathbb{R}^d$, respectivley, and let $f\in\widehat{\mathcal{E}(\Lambda\times\Omega)}$. Then $(\forall\lambda\in\widehat{\mu}(\Lambda))(f(\lambda,\, \cdot\,)\in\widehat{\mathcal{E}(\Omega)}$).
\end{enumerate}	
%__________________
\begin{example}\label{Ex: Exponents} Let $f: \mathbb{R}^3\to \mathbb{C}$ be the function defined by $f(x, y, t)=e^{-(x+iy)t}$. We have $\sigma_{\mathbb{R}^3}(f)\in\widehat{\mathcal{E}(\mathbb{R}^3)}$ by Axiom 7. Also, for every $ x, y \in\widehat{\mu}(\mathbb{R})$ we have $f(x, y, \, \cdot\,)$ $\in\widehat{\mathcal{E}(\mathbb{R})}$ by Axiom 13. We shall often write $e^{-(x+iy)t}$ or even $e^{-zt}$ (where $z=x+iy$)  instead of the more preicise $\sigma_{\mathbb{R}^3}(e^{-(x+iy)t})$ or $\sigma_{\mathbb{R}^3}(e^{-zt})$, respectively.
\end{example}
%__________________
\begin{example} Let $\lambda\in\Omega$ and let $\delta_\lambda\in\mathcal{D}^\prime(\Omega)$ be the Dirac delta distribution with $\supp(\delta_\lambda)=\{\lambda\}$ and $E_\Omega(\delta_\lambda)$ be its image in $\widehat{\mathcal{E}}(\Omega)$.  Notice that the powers $\delta_\lambda^n$ does not make sense in Schwartz's theory of distributions for $n=2,3,\dots$, while the powers $(E_\Omega(\delta_\lambda))^n$ are well defined since $\widehat{\mathcal{E}}(\Omega)$ is an algebra. In what follows we shall write simply $\delta(x-\lambda)$ instead of $E_\Omega(\delta_\lambda)$ and $\delta^n(x-\lambda)$ instead of $(E_\Omega(\delta_\lambda))^n$. Notice that ${\rm Supp}(\delta(x-\lambda))=\{x\in\widehat{\mathbb{R}}^d: ||x-\lambda||\leq s\}$, where $s$ is the scale of $\widehat{\mathbb{R}}$ (Axiom 5). If $\psi\in\mathcal{E}(\Omega)$, then $\psi(x)\delta(x-\lambda)\cong \psi(\lambda)\delta(x-\lambda)$. Similarly, we introduce the Heaviside step-function $H$ and the products $\delta^n\delta^{(n)}, H^n,\delta H$, etc. In particular, $\supp(H)=\{x\in\mathbb{R}: x\geq 0\}$
and  ${\rm Supp}(H)=\{x\in\widehat{\mathbb{R}}: x\geq -s\}$. Thus $H(x)=0$ for all $x\in\widehat{\mathbb{R}}, x\leq -s$ and $H(x)=1$ for all $x\in\widehat{\mathbb{R}}, x\geq s$. We have $H^n\sim H$ and $H\delta\sim\frac{1}{2}\delta$. 
\end{example}
\begin{example}. Let $||\delta||_2=\Big(\int_{-\infty}^\infty \delta^2(x)\, dx\Big)^{1/2}$ be the $L_2$-norm of the delta function. Notice that $||\delta||_2\in\widehat{\mathbb{R}}$ and $||\delta||_2\not=0$ (Definition~\ref{D: Integral}). We define the \textbf{normalized Dirac function} $
\Delta_\lambda(x)=\frac{1}{||\delta||_2}\delta(x-\lambda)$. Thus $||\Delta_\lambda(x)||_2=1$. 
Also, $(x\Delta_\lambda(x)|\tau(x))=(\lambda\Delta_\lambda(x)|\tau(x)$ for all $\tau\in\mathcal{D}(\Omega)$ by (b) of Axiom 9. Notice that $
\Delta_\lambda(x)$ is without a counterpart in distribution theory.
\end{example}

\begin{example}\label{Ex: Shifted Delta}. Next, we consider the generalized function $\delta(x-2s)\in \widehat{\mathcal{E}(\mathbb{R}^d)}$. Here $\delta(x-2s)$ is the infinitesimal translation of $\delta(x)$, i.e. $\delta(x-2s)=\delta_{2s}(x)$ (Axiom 12). We observe that $\delta(x-2s)$ is not a distribution, because $s$ is a non-zero infinitesimal (Axiom 5). Also, $\int_{-\infty}^\infty\delta(x-2s)\tau(x)\, dx =\tau(2s)$ for all $\tau\in\mathcal{E}(\mathbb{R}^d)$. Thus $\int_{-\infty}^\infty\delta(x-2s)\tau(x)\, dx \approx \tau(0)$. Consequently, if $\psi\in\mathcal{E}(\Omega)$, then $\psi(x)\delta(x-2s)\cong \psi(2s)\delta(x-2s)\cong \psi(0)\delta(x-2s)$. Also, for every $z\in\mathbb{C}$ we have $\int_{-\infty}^\infty H(x)\delta(x-2s)e^{-zx}\, dx= e^{-2sz}$.
\end{example}

%___________________________
\section{Uniqueness of $\widehat{\mathbb{R}}$ and $\widehat{\mathbb{C}}$}

	We show that Axiom 1-6 determines uniquely $\widehat{\mathbb{R}}$ and $\widehat{\mathbb{C}}$ up to a field isomorphism.
\begin{theorem} If there exists a field $\widehat{\mathbb{R}}$ satisfying Axiom 1-6, then $\widehat{\mathbb{R}}$ is unique up to a field isomorphism which preserves the scale. Consequently, the algebraically closed field $\widehat{\mathbb{C}}=:\widehat{\mathbb{R}}(i)$ is also uniquely determined by Axiom 1-6 up to a field isomorphism.
\end{theorem}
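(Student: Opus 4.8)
The plan is to recognize $\widehat{\mathbb{R}}$ as a real closed \emph{valued} field with prescribed residue field, value group and degree of completeness, and then to prove uniqueness by a Kaplansky-style back-and-forth of length $\mathfrak{c}^+$. First I would pin down the three invariants the axioms force. By Theorem~\ref{T: Algebraic and Topological Properties} the valuation $v$ is non-Archimedean; its residue field is $\mathbb{R}$, because Axiom 2 together with the real analogue of the standard-part decomposition in Lemma~\ref{L: Standard Part Mapping in Chat}, namely $\mathcal{F}(\widehat{\mathbb{R}})=\mathbb{R}\oplus\mathcal{I}(\widehat{\mathbb{R}})$, identifies $\mathcal{F}/\mathcal{I}$ with $\mathbb{R}$. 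The value group is all of $\mathbb{R}$: since $v(s^q)=q$ for $q\in\mathbb{Q}$, and since Axiom 6 supplies $s^\gamma:=\exp_s(\gamma)\in\widehat{\mathbb{R}}_+$ for every $\gamma\in\mathbb{R}\subset\mathcal{F}(\widehat{\mathbb{R}})$, a monotonicity squeeze between rational exponents gives $v(s^\gamma)=\gamma$, so $v$ maps onto $\mathbb{R}$. Thus any two fields $\widehat{\mathbb{R}}_1,\widehat{\mathbb{R}}_2$ satisfying Axioms 1--6 share the residue field $\mathbb{R}$ and the value group $\mathbb{R}$, and each carries a distinguished uniformizer $s_j$ with $v(s_j)=1$.

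Next I would record exactly the completeness I need. A pseudo-Cauchy sequence indexed by some $\lambda\le\mathfrak{c}$ has, as its set of pseudo-limits, the intersection of a nested family of closed valuation balls $\{x:|x-a_\alpha|\le s^{\gamma_\alpha}\}=[a_\alpha-s^{\gamma_\alpha},\,a_\alpha+s^{\gamma_\alpha}]$, which are honest closed intervals of $\widehat{\mathbb{R}}$ (here Axiom 6 is used to make $s^{\gamma_\alpha}$ available for real $\gamma_\alpha$). Being nested, the family has the finite intersection property and cardinality $\le\mathfrak{c}$, so Axiom 3 produces a point in the intersection, i.e.\ a pseudo-limit. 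Hence each $\widehat{\mathbb{R}}_j$ is pseudo-complete for pseudo-Cauchy sequences of length $\le\mathfrak{c}$. I do \emph{not} claim maximal completeness for arbitrary chains --- that would force the cardinality of a full Hahn field --- only the bounded version compatible with $\card(\widehat{\mathbb{R}}_j)=\mathfrak{c}^+$ (Axiom 4).

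The core is the back-and-forth. I would enumerate $\widehat{\mathbb{R}}_1$ and $\widehat{\mathbb{R}}_2$ in order type $\mathfrak{c}^+$ and build an increasing chain of partial valued-field isomorphisms $f_\xi$ between subfields of size $\le\mathfrak{c}$, starting from $f_0=\mathrm{id}_{\mathbb{R}}$ extended by $s_1\mapsto s_2$; this is a valued-field isomorphism of the subfield generated by $s_1$ onto that generated by $s_2$, because each $s_j$ is transcendental over $\mathbb{R}$ (an element algebraic over $\mathbb{R}$ lies in $\mathbb{R}$, as $\mathbb{R}$ is real closed) and $v(s_1)=v(s_2)=1$. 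At a typical stage, to bring a new element $a$ into the domain over the current subfield $A$, I invoke the Kaplansky trichotomy: if $a$ is algebraic over $A$ its image is forced and exists because $\widehat{\mathbb{R}}_2$ is real closed (Axiom 1); if $a$ is value- or residue-transcendental over $A$, a correctly placed target exists because $\widehat{\mathbb{R}}_2$ already realizes the full value group $\mathbb{R}$ (via the canonical elements $s_2^\gamma$) and the full residue field $\mathbb{R}$; and if $A(a)/A$ is immediate, then $a$ is a pseudo-limit of a pseudo-Cauchy sequence in $A$ of length $\le\card(A)\le\mathfrak{c}$, whose image in $f_\xi(A)$ has a pseudo-limit $b\in\widehat{\mathbb{R}}_2$ by the previous paragraph; putting $f(a)=b$ extends the map to $A(a)$. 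The symmetric step brings new elements into the range, and unions are taken at limit stages. The resulting $\Phi=\bigcup_\xi f_\xi$ is a valued-field isomorphism of $\widehat{\mathbb{R}}_1$ onto $\widehat{\mathbb{R}}_2$ with $\Phi\rest\mathbb{R}=\mathrm{id}$ and $\Phi(s_1)=s_2$; since the order on a real closed field is definable by $x\ge 0\iff x=y^2$ (Theorem~\ref{T: Non-Archimedean Field}), $\Phi$ is automatically order-preserving, hence the required scale-preserving field isomorphism. Extending by $i\mapsto i$ gives $\widehat{\mathbb{C}}_1=\widehat{\mathbb{R}}_1(i)\to\widehat{\mathbb{R}}_2(i)=\widehat{\mathbb{C}}_2$.

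The main obstacle I expect is the Kaplansky extension step together with the verification that Axiom 3 supplies precisely the pseudo-limits it is invoked for: one must check that the trichotomy is exhaustive, that in the immediate case the nested valuation balls genuinely reduce to closed intervals, and --- most delicately --- that the bookkeeping keeps every intermediate subfield of size $\le\mathfrak{c}$, so that the length-$\le\mathfrak{c}$ pseudo-Cauchy sequences remain fillable throughout a recursion of length $\mathfrak{c}^+$.
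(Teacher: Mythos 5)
Your overall strategy (reduce to a valued-field classification and apply a Kaplansky-type uniqueness argument) is the same one the paper uses, but there is a genuine error at the very first step that the rest of the argument depends on: the residue field of the valuation $v$ is \emph{not} $\mathbb{R}$. You have conflated the valuation ring of $v$, namely $\mathcal{F}_v=\{x:v(x)\geq 0\}=\{x:(\forall n)(|x|<s^{-1/n})\}$, with the ring of Archimedean-finite elements $\mathcal{F}(\widehat{\mathbb{R}})=\{x:(\exists n)(|x|\leq n)\}$. These are different: by Axiom 3 applied to the countable family of intervals $[n,s^{-1/m}]$ there exist infinitely large elements $x$ with $v(x)=0$, so $\mathcal{F}_v\supsetneqq\mathcal{F}(\widehat{\mathbb{R}})$ and the residue field $\widehat{\mathcal{F}_v}=\mathcal{F}_v/\mathcal{I}_v$ is a \emph{non-Archimedean} real closed field of cardinality $\mathfrak{c}^+$, not $\mathbb{R}$. (Dually, if you instead use the natural valuation whose residue field really is $\mathbb{R}$ via the standard-part decomposition, its value group is the group of Archimedean classes of $\widehat{\mathbb{R}}$, which is not $\mathbb{R}$ and whose uniqueness would itself need proof.) Consequently the residue-transcendental and immediate cases of your back-and-forth cannot be discharged as stated: placing a residue-transcendental element requires knowing that the two residue fields $\widehat{\mathcal{F}_{v,1}}$ and $\widehat{\mathcal{F}_{v,2}}$ are isomorphic (indeed, that partial isomorphisms between them extend), and this is precisely where most of the work in the paper's proof lies. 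The paper shows that $\widehat{\mathcal{F}_v}$ is real closed, algebraically $\mathfrak{c}^+$-saturated and of cardinality $\mathfrak{c}^+$, hence a saturated model of RCF of its own cardinality and therefore unique up to isomorphism; it then chooses a field of representatives $\mathbb{M}\cong\widehat{\mathcal{F}_v}$, embeds the Levi-Civita field $\mathbb{M}\left<s^{\mathbb{R}}\right>$ into $\widehat{\mathbb{R}}$, and invokes Kaplansky's uniqueness of maximal immediate extensions to get $\widehat{\mathbb{R}}\cong\mathbb{M}((t^{\mathbb{R}}))$. None of this machinery is avoidable by your route; it just reappears inside the back-and-forth.

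The parts of your proposal that are sound are: the computation $v(s^\gamma)=\gamma$ showing the value group is all of $\mathbb{R}$; the observation that closed valuation balls are order-intervals so that Axiom 3 yields pseudo-limits (in fact, since strictly increasing sequences in the value group $\mathbb{R}$ are countable, one gets full spherical completeness, which is what the paper uses to apply Luxemburg's and Kaplansky's theorems); and the remark that any field isomorphism of real closed fields is automatically order-preserving. To repair the proposal, replace the claim ``residue field $=\mathbb{R}$'' by a proof that $\widehat{\mathcal{F}_v}$ is determined up to isomorphism by Axioms 1--6 (saturation plus cardinality), and thread that isomorphism through the residue-field cases of the recursion; at that point your argument becomes essentially a hand-rolled version of Kaplansky's theorem, which the paper simply cites.
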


	A proof of this result appears in \cite{TodWolf04} which is written in the framework of Robinson's non-standard analysis. We present below a short ``translation of this proof into standard language''. We should warn about some notational differences: the counterparts of
$\widehat{\mathbb{R}}, \mathcal{F}_v, \mathcal{I}_v, \widehat{\mathcal{F}_v}, \mathbb{M}, \mathbb{F}$ and $ \mathbb{K}((t^\mathbb{R}))$ in this article are denoted in \cite{TodWolf04} by: $^\rho\mathbb{R}, \mathcal{F}_\rho, \mathcal{I}_\rho, \widehat{^\rho\mathbb{R}}, \widehat{\mathbb{R}}, {^*\mathbb{R}}$ and $\mathbb{K}(t^\mathbb{R})$, respectively.
\begin{proof}
\begin{enumerate}
\item We define $\mathcal{F}_v=\big\{x\in\widehat{\mathbb{R}}: v(x)\geq 0\big\}$ and $\mathcal{I}_v=\big\{x\in\widehat{\mathbb{R}}: v(x)>0\big\}$,
and let $\widehat{\mathcal{F}_v}=\mathcal{F}_v/\mathcal{I}_v$. We observe that $\mathcal{F}_v=\big\{x\in\widehat{\mathbb{R}}: (\forall n\in\mathbb{N})(|x|<1/\sqrt[n]{s})\big\}$ and $\mathcal{I}_v=\big\{x\in\widehat{\mathbb{R}}: (\exists n\in\mathbb{N})(|x|\leq\sqrt[n]{s})\big\}$.

\item  \label{I: Maximal Field} Let $\mathbb{M}$ be a subfield of $\widehat{\mathbb{R}}$, containing $\mathbb{R}$, which is maximal in $\mathcal{F}_v$. Then $\mathcal{F}_v=\mathbb{M}\oplus\mathcal{I}_v$  in the sense that $(\forall x\in\mathcal{F}_v)(\exists!r\in\mathbb{M})(x-r\in\mathcal{I}_v)$. Consequently, $\mathbb{M}$ is a \textbf{field of representatives} for $\widehat{\mathcal{F}_v}$ in the sense that $\widehat{\mathcal{F}_v}\equiv\mathbb{M}$ (\cite{ZariskiSamuel}, p. 281). Notice that $v(x)=0$ for all $x\in\mathbb{M}\setminus\{0\}$. 

\item \label{I: Residue Field} (i) $\widehat{\mathcal{F}_v}$ is a real closed non-Archimedean field (\cite{TodWolf04}, p. 363); (ii) $\widehat{\mathcal{F}_v}$ is algebraically $\mathfrak{c}^+$-saturated in the sense that every family $\{(a_\gamma, b_\gamma)\}_{\gamma\in\Gamma}$ of open intervals in $\widehat{\mathcal{F}_v}$ with the finite intersection property and $\card(\Gamma)\leq\mathfrak{c}$ has a non-empty intersection $\bigcap_{\gamma\in\Gamma} (a_\gamma, b_\gamma)\not=\varnothing$ (\cite{TodWolf04}, Theorem 7, p. 370); (iii) $\card(\widehat{\mathcal{F}_v})=\mathfrak{c}^+$. Indeed, (ii) implies
$\card(\widehat{\mathcal{F}_v})\geq \mathfrak{c}^+$, since $\widehat{\mathcal{F}_v}$ is Cantor complete non-Archimedean field (Remark~\ref{R: Cardinality of Cantor Complete Non-Archimedean Field}). On the other hand, \#\ref{I: Maximal Field} implies $\card(\widehat{\mathcal{F}_v})\leq\mathfrak{c}^+$ since $\card(\widehat{\mathbb{R}})=\mathfrak{c}^+$ by Axiom 3. The reader will observe that (i)-(iii) follow only from Axiom 1-6.

\item Let $\mathbb{F}$ be a field satisfying (i)-(iii) in \#\ref{I: Residue Field} (instead of $\widehat{\mathcal{F}_v}$). Then $\mathbb{F}\equiv\widehat{\mathcal{F}_v}$ (\cite{TodWolf04}, \S 7, p. 369-371).

\item Let $\mathbb{K}$ be a field. We denote by $\mathbb{K}\left<t^\mathbb{R}\right>$ the set of the \textbf{Levi-Civita~\cite{tLC} series} with coefficients in $\mathbb{K}$, i.e. the power series of the form $\sum_{n=0}^\infty a_n t^{\nu_n}$, where $a_n\in\mathbb{K}$, $(\nu_n)$ is a sequence in $\mathbb{R}$ such that $\nu_0<\nu_1<\nu_2<\dots$, $\lim_{n\to\infty}\nu_n=\infty$ and $t$ is an indeterminate. We denote by $\mathbb{K}((t^\mathbb{R}))$ the field of \textbf{Hahn power series} $\sum_{r\in\mathbb{R}}a_r t^r$ with coefficients $a_r$ in $\mathbb{K}$, where $\{r\in\mathbb{R}: a_r\not=0\}$ is a well-ordered set (see~\cite{hH}). If $\mathbb{K}$ is a real closed field, then both $\mathbb{K}\left<t^\mathbb{R}\right>$ and $\mathbb{K}((t^\mathbb{R}))$ are also real closed fields. Also, by a result due to Krull~\cite{wKrull}, $\mathbb{K}((t^\mathbb{R}))$ is a maximal immediate extension of $\mathbb{K}\left<t^\mathbb{R}\right>$. 

\item We define $\mathbb{M}\left<s^\mathbb{R}\right>=:\big\{\sum_{n=0}^\infty a_n s^{\nu_n}: \sum_{n=0}^\infty a_n t^{\nu_n}\in\mathbb{M}\left<t^\mathbb{R}\right>\big\}$. We observe that $\mathbb{M}\left< s^\mathbb{R}\right>\subset\widehat{\mathbb{R}}$, because $s^{\nu_n}$ is well-defined by Axiom 6 and the series $\sum_{n=0}^\infty a_n s^{\nu_n}$ are all convergent in $\widehat{\mathbb{R}}$ by the (iii)-part of Theorem~\ref{T: Algebraic and Topological Properties} since $v(a_n s^{\nu_n})=v(a_n)+v(s^{\nu_n})=v(s^{\nu_n})=\nu_n \to\infty$ as $n\to\infty$. We define $J: \mathbb{M}\left<s^\mathbb{R}\right>\to\mathbb{M}\left<t^\mathbb{R}\right>$ by $J(\sum_{n=0}^\infty a_n s^{\nu_n})=\sum_{n=0}^\infty a_n t^{\nu_n}$.

\item We observe that $J$ is a field isomorphism  such that $J(s)=t$ and $J|\mathbb{M}=id$. Thus, $J^{-1}$ is a field embedding of $\mathbb{M}\left<t^\mathbb{R}\right>$ into $\widehat{\mathbb{R}}$ and, consequently,  $\mathbb{M}\left<s^\mathbb{R}\right>$ is a subfield of $\widehat{\mathbb{R}}$. Also, by a result due to Luxemburg~\cite{wLux}, $\widehat{\mathbb{R}}$ is a maximal immediate extension of  $\mathbb{M}\left<s^\mathbb{R}\right>$ since $\widehat{\mathbb{R}}$ is spherically complete by part (ii) of Theorem~\ref{T: Algebraic and Topological Properties}. 

\item By a theorem due to Kaplansky~\cite{iKa} (characteristic $0$ case of Theorem 5) there exists a field isomorphism $\widehat{J}:\widehat{\mathbb{R}}\to\mathbb{M}((t^\mathbb{R}))$ which is an extension of $J$. Consequently, 
$\widehat{\mathbb{R}}\equiv\mathbb{M}((t^\mathbb{R}))$ (see~\cite{TodWolf04}, \S 6, p.368). 

\item Let $\mathbb{F}$ be a field satisfying (i)-(iii) in \#\ref{I: Residue Field} (instead of $\widehat{\mathcal{F}_v}$). Then $\mathbb{M}\equiv\widehat{\mathcal{F}_v}\equiv\mathbb{F}$ by \#2 and \#4. Thus $\widehat{\mathbb{R}}\equiv\mathbb{F}((t^\mathbb{R}))$ by \#8. This completes the proof, because the properties (i)-(iii) in \#\ref{I: Residue Field} of the field $\mathbb{F}$ follow only from Axiom 1-6.
\end{enumerate} 
\end{proof}
%_________________
\section{Consistency of the axioms of $\widehat{\mathcal{E}(\Omega)}$}

\begin{theorem} Axiom 1-10 in Section~\ref{S: Generalized Scalars and Functions in Axioms} are consistent (under ZFC (Zermelo-Fraenkel plus Axiom of Choice) and the generalized continuum hypothesis $2^\mathfrak{c}=\mathfrak{c}^+$, where $\mathfrak{c}=\card(\mathbb{R})$).
\end{theorem}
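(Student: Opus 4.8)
The plan is to establish consistency by exhibiting a single model of Axioms 1--10 that can be constructed within ZFC together with the hypothesis $2^{\mathfrak c}=\mathfrak c^+$. Concretely, I would take the field of generalized scalars to be the Robinson-type valuation field $\widehat{\mathbb R}={}^\rho\mathbb R$ built in (\cite{TodVern08}, \S1--5): fix a nonstandard extension ${}^*\mathbb R$ of $\mathbb R$ and a positive infinitesimal $\rho\in{}^*\mathbb R$, form the ring $\mathcal F_\rho$ of $\rho$-moderate elements and its ideal $\mathcal N_\rho$ of $\rho$-negligible elements, and set ${}^\rho\mathbb R=\mathcal F_\rho/\mathcal N_\rho$, $\widehat{\mathbb C}={}^\rho\mathbb R(i)$. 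Over this field one has the associated Colombeau-type algebra $\widehat{\mathcal E(\Omega)}$ together with the embeddings $\sigma_\Omega$ and $E_\Omega$ constructed there. Since the entire construction is carried out inside ZFC${}+{}$GCH, producing this model proves the relative consistency asserted.

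The first step is to pin down the ambient nonstandard universe so that the cardinality and completeness axioms hold. I would choose ${}^*\mathbb R$ so that it is $\mathfrak c^+$-saturated and of cardinality exactly $\mathfrak c^+$; this is the only place where $2^{\mathfrak c}=\mathfrak c^+$ is used. By classical model theory a saturated elementary extension in the successor power $\kappa=\mathfrak c^+$ exists precisely because GCH gives $\kappa^{<\kappa}=\kappa$, and such a model is automatically $\mathfrak c^+$-saturated and of cardinality $\mathfrak c^+$. This single choice delivers Axiom 3 (Cantor $\mathfrak c^+$-completeness, since $\mathfrak c^+$-saturation of ${}^*\mathbb R$ descends to the nested-interval property of ${}^\rho\mathbb R$) and Axiom 4 ($\card(\widehat{\mathbb R})=\mathfrak c^+$, since passing to $\mathcal F_\rho/\mathcal N_\rho$ does not increase cardinality while Remark~\ref{R: Cardinality of Cantor Complete Non-Archimedean Field} forces $\card(\widehat{\mathbb R})\ge\mathfrak c^+$).

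Next I would verify the remaining scalar axioms 1, 2, 5, 6, which are essentially the properties already recorded in Theorems~\ref{T: Non-Archimedean Field} and \ref{T: Algebraic and Topological Properties}. Axiom 1 (real closedness of ${}^\rho\mathbb R$) is inherited from ${}^*\mathbb R\equiv\mathbb R$ through the valuation-factor construction as in \cite{TodWolf04}; Axiom 2 is witnessed by the class of $\rho$, a nonzero infinitesimal, so $\mathbb R\subsetneqq{}^\rho\mathbb R$. For Axiom 5 I take $s$ to be the class of $\rho$ and check that $\{(-s^n,s^n)\}$ is a neighborhood base of $0$; for Axiom 6 the exponentiation $\exp_s$ is defined on the finite elements by lifting the real value group through chosen representatives, giving the required group isomorphism onto $\widehat{\mathbb R}_+$ with $\exp_s(q)=s^q$ for $q\in\mathbb Q$. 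These verifications are routine given the cited scalar results.

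The substantive work, and the step I expect to be the main obstacle, is the verification of the function-algebra axioms 7--10, and above all Axiom 9. The embedding $\sigma_\Omega$ of Axiom 7 is the pointwise extension of smooth functions to the monad $\widehat\mu(\Omega)$, and the subalgebra $\widehat{\mathcal E(\Omega)}$ with its pairing (Axiom 8) and the sheaf structure (Axiom 10) follow the construction of \cite{TodVern08}. The delicate point is to build the Colombeau embedding $E_\Omega:\mathcal D^\prime(\Omega)\to\widehat{\mathcal E(\Omega)}$ via a net of mollifiers indexed by the scale $s$ and to show simultaneously that it is injective, linear, commutes with all partial derivatives, agrees with $\sigma_\Omega\circ L_\Omega$ on $\mathcal E(\Omega)$, preserves each of the three dual pairings $\mathcal D^\prime/\mathcal D$, $\mathcal S^\prime/\mathcal S$ and $\mathcal E^\prime/\mathcal E$, sends real distributions to $\widehat{\mathbb R}$-valued functions, and satisfies the sharp support-localization property (e) at radius $s$. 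Reconciling injectivity with the preservation of $\sigma_\Omega$ (i.e.\ with $E_\Omega\circ L_\Omega=\sigma_\Omega$) is the heart of any Colombeau-type construction; here it is carried out in (\cite{TodVern08}, \S1--5), and the remaining compatibility statements reduce to bookkeeping once the mollifier net is fixed.
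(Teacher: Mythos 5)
Your proposal is mathematically viable as a relative-consistency argument, but it follows a genuinely different route from the paper, and you misattribute your construction. The model the paper actually builds, $\widehat{\mathbb{R}^{\mathcal{D}}}$, $\widehat{\mathbb{C}^{\mathcal{D}}}$ and $\widehat{\mathcal{E}(\Omega)^{\mathcal{D}}}$ from (\cite{TodVern08}, \S 1--5), lives entirely inside standard analysis: one fixes the directing sets $\mathcal{D}_n$ of test functions, chooses a $\mathfrak{c}^+$-good free ultrafilter $\mathcal{U}$ on $\mathcal{D}$ containing every $\mathcal{D}_n$, and factors the moderate $\mathcal{D}$-nets (measured against the radius of support $R_\varphi$) by the negligible ones; the distributional embedding is $E_\Omega(T)=\widehat{T\circledast\varphi}$, and the hypothesis $2^{\mathfrak{c}}=\mathfrak{c}^+$ enters through the existence of the good ultrafilter in the proof that $\widehat{\mathbb{R}^{\mathcal{D}}}$ is Cantor $\mathfrak{c}^+$-complete. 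What you describe --- a $\mathfrak{c}^+$-saturated nonstandard extension $^*\mathbb{R}$ of cardinality $\mathfrak{c}^+$ and the Robinson factor field $^\rho\mathbb{R}=\mathcal{F}_\rho/\mathcal{N}_\rho$ --- is the \emph{other} model, the one from \cite{OberTod98} that the Introduction explicitly offers as an alternative (requiring $^*\mathbb{R}$ to be fully saturated, or more generally a special model, with $\card(^*\mathbb{R})=\mathfrak{c}^+$); it is not what is carried out in \S 1--5 of \cite{TodVern08}. Both routes invoke GCH at the analogous place (existence of a saturated elementary extension in the successor power versus existence of a $\mathfrak{c}^+$-good ultrafilter), and both delegate the verification of Axioms 7--10 to the cited constructions, so your argument does establish the theorem. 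The trade-off is that the paper's choice avoids any appeal to a nonstandard universe and keeps the model presentable in classical terms, while yours imports saturation wholesale and makes Axioms 3 and 4 nearly immediate --- at the cost of the extra care needed to see that $\mathfrak{c}^+$-saturation of $^*\mathbb{R}$ (a statement about internal families) actually descends to Cantor $\mathfrak{c}^+$-completeness of the external quotient $^\rho\mathbb{R}$, which is the content of (\cite{TodWolf04}, \S 7) rather than a formality.
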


\begin{proof}  We offer a model of the above system of our axioms in the framework of the (standard) analysis by constructing a differential algebra over a non-Archimedean field - we denote them by 
$\widehat{\mathcal{E}(\Omega)^\mathcal{D}}$, $\widehat{\mathbb{R}^\mathcal{D}}$ and $\widehat{\mathbb{C}^\mathcal{D}}$, respectively - which satisfy all of the above axioms if treated as $\widehat{\mathcal{E}(\Omega)}$, $\widehat{\mathbb{R}}$ and $\widehat{\mathbb{C}}$, respectively.  The construction of $\widehat{\mathcal{E}(\Omega)^\mathcal{D}}$ and $\widehat{\mathbb{C}^\mathcal{D}}$ appears in (\cite{TodVern08}, \S1-5). Here is a summary of this construction (warning: in this article we use the notation $\mathcal{D}=:\mathcal{D}(\mathbb{R}^d)$ instead of $\mathcal{D}_0$ in \cite{TodVern08}):
\begin{enumerate}
\item For any $\varphi\in\mathcal{D}=:\mathcal{D}(\mathbb{R}^d)$ we define the \textbf{radius of support} of $\varphi$ by
\begin{equation}\label{E: RadiusSupport}
R_\varphi=
\begin{cases}
 \sup\{||x|| : x\in\mathbb{R}^d,\;  \varphi(x)\not= 0 \}, &\text{\; if\; } \varphi\not=0,\\
1, 	&\text{\; if\; }\varphi=0. 
\end{cases}
\end{equation}

\item For any $n\in\mathbb{N}$ we define the \textbf{directing set} of test functions:
\begin{align}\notag
{\mathcal D}_{n}= \big\{&
\varphi\in{\mathcal D}: \varphi \text{\;is real-valued\,}, R_\varphi\leq 1/n, \quad \int_{\mathbb{R}^d}\varphi(x)\, dx=1,\\
&(\forall\alpha\in\mathbb{N}_0
^d)\big(1 \leq |\alpha|\leq n \Rightarrow\int_{\mathbb{R}^d}x^\alpha\varphi(x)\, dx=0\big),\notag\\ 
&\int_{\mathbb{R}^d}|\varphi(x)|\, dx \leq 1 + \frac{1}{n},\notag\\
&(\forall\alpha\in\mathbb{N}_0^d) (|\alpha|\leq n \Rightarrow\sup_{x\in\mathbb{R}^d}|{\partial^\alpha
\varphi(x)|}\leq (R_\varphi)^{-2(|\alpha| +d)})\notag\; \big\}.
\end{align}  
\item There exists a $\mathfrak{c}^+$-good free ultrafilter  
$\mathcal{U}$ on $\mathcal{D}$ such that $\mathcal{D}_n\in\mathcal{U}$ for all $n\in\mathbb{N}$ (\cite{TodVern08}, p. 210-213). 

\item Let $S$ be a set. 
 We denote by $S^\mathcal{D}$ the set of all \textbf{$\mathcal{D}$-nets in $S$}, i.e. all functions from $\mathcal{D}$ to $S$ (\cite{TodVern08}, p. 213-217). 

\item Let $(x_\varphi), (y_\varphi)\in \mathbb{R}^\mathcal{D}$ be to nets in 
$\mathbb{R}$. 
\begin{enumerate}
\item We say that $x_\varphi$ and $y_\varphi$ are \textbf{equal almost everywhere}, in symbol,  $x_\varphi\overset{\mathbf{\centerdot}}{=}y_\varphi$, if $\{\varphi\in\mathcal{D} :  x_\varphi=y_\varphi\}\in\mathcal{U}$.

\item We say that $x_\varphi$ \textbf{is less than} $y_\varphi$ \textbf{almost everywhere}, in symbol,  
$x_\varphi\overset{\mathbf{\centerdot}}{<} y_\varphi$, 
if $\{\varphi\in\mathcal{D} :  x_\varphi<y_\varphi\}\in\mathcal{U}$. Similarly, we say that $x_\varphi$ \textbf{is less or equal than} $y_\varphi$ \textbf{almost everywhere}, in symbol,  
$x_\varphi\overset{\mathbf{\centerdot}}{\leq} y_\varphi$, 
if $\{\varphi\in\mathcal{D} :  x_\varphi\leq y_\varphi\}\in\mathcal{U}$.

\item The terminology ``almost everywhere'' is justified by the following results: Let $p: \mathcal{P}(\mathcal{D})\to\{0, 1\}$ be defined by $p(S)=0$ if $S\notin\mathcal{U}$ and  $p(S)=1$ if $S\in\mathcal{U}$. Here $\mathcal{P}(\mathcal{D})$ stands for the power set of $\mathcal{D}$. Then $p$ is a \textbf{finitely additive probability measure} such that: {\rm (i)} $p(\mathcal{D}_n)=1$ for all $n\in\mathbb{N}$; {\rm (ii)} $p(S)=0$ for any finite set $S$. Also, $p(\cup_{n=1}^m S_n)=1$ implies $(\exists n\in\mathbb{N})(p(S_n)=1)$ (\cite{TodVern08}, p. 213).
\end{enumerate}
\item  We define the sets of the
\textbf{moderate} and \textbf{negligible} nets in $\mathbb{C}^{\mathcal{D}}$ by
\begin{align}
&\mathcal{M}(\mathbb{C}^{\mathcal{D}})=\big\{(z_\varphi)\in{\mathbb{C}^{\mathcal{D}}} : \;
(\exists m\in\mathbb{N})\big(|z_\varphi|\leqae (R_\varphi)^{-m} \big) \big\},\notag\\ 
&\mathcal{N}(\mathbb{C}^{\mathcal{D}})=\big\{(z_\varphi)\in{\mathbb{C}^{\mathcal{D}}}:\;
(\forall p\in\mathbb{N})\big(|z_\varphi|\lae (R_\varphi)^{p} \big)\big\}, \notag
\end{align}
respectively. The elements of  $\widehat{\mathbb{C}^\mathcal{D}}=
\mathcal{M}(\mathbb{C}^{\mathcal{D}})/\mathcal{N}(\mathbb{C}^{\mathcal{D}})$ are called \textbf{asymptotic numbers} and we
denote by
$\widehat{z_\varphi}\in\widehat{\mathbb{C}^\mathcal{D}}$ the equivalence class of the net $(z_\varphi)\in\mathbb{C}^\mathcal{D}$. We supply $\widehat{\mathbb{C}^\mathcal{D}}$ with the ring operations inherited from $\mathbb{C}^{\mathcal{D}}$. Also, we let $|\widehat{z_\varphi}|=\widehat{|z_\varphi|}$ for the \textbf{absolute value} of $\widehat{z_\varphi}$. 

\item We define a \textbf{non-Archimedean valuation} $v: \widehat{\mathbb{C}^\mathcal{D}}\to \mathbb{R}\cup\{\infty\}$ by $v(0)=\infty$ and  $v(\widehat{z_\varphi})=\sup\{r\in\mathbb{R}: r\leqae\ln|z_\varphi|/\ln{(R_\varphi)}\}$, if $\widehat{z_\varphi}\not=0$. We define the \textbf{ultra-norm} $|\cdot|_v: \widehat{\mathbb{C}^\mathcal{D}}\to \mathbb{R}$ by $|\widehat{z_\varphi}|_v=e^{-v(\widehat{z_\varphi})}$. Finally, we define the \textbf{ultra-metric} on $\widehat{\mathbb{C}^\mathcal{D}}$ by $d(a, b)_v=|a-b|_v$.

\item We define the \textbf{real asymptotic numbers} $\widehat{\mathbb{R}^\mathcal{D}}\subset\widehat{\mathbb{C}^\mathcal{D}}$ by $\widehat{x_\varphi}\in\widehat{\mathbb{R}^\mathcal{D}}$ if there exists a net $(y_\varphi)$ in $\mathbb{R}^\mathcal{D}$ such that  $x_\varphi\eqae y_\varphi$. We define an \textbf{order relation} on
$\widehat{\mathbb{R}^\mathcal{D}}$ as follows: Let
$\widehat{x_\varphi}\not= 0$. Then  
$\widehat{x_\varphi}>0$  if $x_\varphi\gae 0$. The asymptotic number $\widehat{\rho}=:\widehat{R_\varphi}$ is called the \textbf{canonical infinitesimal} in $\widehat{\mathbb{R}^\mathcal{D}}$. If $x\in\mathbb{R}$, we let $(\widehat{\rho})^x=\widehat{e^{xR_\varphi}}$.

\item We supply $\widehat{\mathbb{R}^\mathcal{D}}$ with the \textbf{order topology}. Also, we supply $\widehat{\mathbb{C}^\mathcal{D}}$ and $\widehat{\mathbb{R}^\mathcal{D}}^d=:\widehat{\mathbb{R}^\mathcal{D}}\times\widehat{\mathbb{R}^\mathcal{D}}\times\dots\times\widehat{\mathbb{R}^\mathcal{D}}$ with the corresponding product-order topology inherited from $\widehat{\mathbb{R}^\mathcal{D}}$.

\item\label{I: Embeddings} We define the \textbf{embeddings} $\mathbb{C}\subset\widehat{\mathbb{C}^\mathcal{D}}$ and 
$\mathbb{R}\subset\widehat{\mathbb{R}^\mathcal{D}}$ by the constant nets, i.e. by $z\to\widehat{z}$.

\item  Let $\Omega$ be an open set of
$\mathbb{R}^d$ and $R_\varphi$
be the radius of support of $\varphi$. A net $(f_\varphi)\in\mathcal{E}(\Omega)^{\mathcal{D}}$ is called \textbf{moderate} or \textbf{negligible}, if 
\begin{align}\label{E: rho-moderate}\notag
& (\forall K\Subset\Omega)(\forall\alpha\in\mathbb{N}_0^d)(\exists
m\in\mathbb{N})(\sup_{x\in K}|\partial^\alpha f_\varphi(x)|\leqae (R_\varphi)^{-m}), \notag\\
& (\forall K\Subset\Omega)(\forall\alpha\in\mathbb{N}_0^d)(\forall
p\in\mathbb{N})(\sup_{x\in K}|\partial^\alpha f_\varphi(x)|\lae (R_\varphi)^{p}),\notag
\end{align}
respectively. We denote by $\mathcal{M}(\mathcal{E}(\Omega)^{\mathcal{D}})$ and $\mathcal{N}(\mathcal{E}(\Omega)^{\mathcal{D}})$ the sets of the moderate and negligible $\mathcal{D}$-nets in $\mathcal{E}(\Omega)$, respectively. The elements of the factor ring $
\widehat{\mathcal{E}(\Omega)^\mathcal{D}}=
\mathcal{M}(\mathcal{E}(\Omega)^{\mathcal{D}})/\mathcal{N}(\mathcal{E}(\Omega)^
{\mathcal{D}})$ are called
\textbf{asymptotic functions} on $\Omega$. We denote by
$\widehat{f_\varphi}\in\widehat{\mathcal{E}(\Omega)^\mathcal{D}}$ the equivalence
class of the net $(f_\varphi)$.

\item\label{I: canonical embedding}  The \textbf{canonical embedding} $\sigma_\Omega: \mathcal{E}(\Omega)\to\widehat{\mathcal{E}(\Omega)^\mathcal{D}}$  is defined by $\sigma_\Omega(f)=\widehat{f}$. We also  define the \textbf{embedding} $\widehat{\mathbb{C}^\mathcal{D}}\subset\widehat{\mathcal{E}(\Omega)^{\mathcal{D}}}$ by 
$\widehat{z_\varphi}\to\widehat{Z_\varphi}$, where $Z_\varphi(x)=z_\varphi$ for all $\varphi\in\mathcal{D}$ and $x\in\Omega$.

\item\label{I: Embedding}  We define the \textbf{embedding} $E_\Omega: \mathcal{D}^\prime(\Omega)\to\widehat{\mathcal{E}(\Omega)^\mathcal{D}}$ by $E_\Omega(T)=\widehat{T\circledast\varphi}$, where $T\circledast\varphi$ stands for the ``generalized convolution'': convolution along with a cut off (see~\cite{TodVern08}, p. 214).

\item \label{I: Association} Let $\widehat{f_\varphi}\in\widehat{\mathcal{E}(\Omega)^\mathcal{D}}$ and $\tau\in\mathcal{D}(\Omega)$. We define the \textbf{pairing} $\big(\widehat{f_\varphi}\big|\tau\big)\in\widehat{\mathbb{C}^\mathcal{D}}$ by $\big(\widehat{f_\varphi}\big|\tau\big)=:\widehat{\left(f_\varphi|\tau\right)}$, where $\left(f_\varphi|\tau\right)=\int_\Omega f_\varphi(x)\tau(x)\, dx$. 

\item We let $\mu_\mathcal{D}(\Omega)=:\big\{\omega+h_\varphi: \omega\in\Omega, (h_\varphi)\in(\mathbb{R}^d)^\mathcal{D}, (\forall n\in\mathbb{N})(||h_\varphi||\lae1/n)\big\}$. We define an equivalence relation on $\mu_\mathcal{D}(\Omega)$ by $(x_\varphi)\sim(y_\varphi)$ if $||x_\varphi-y_\varphi||\in\mathcal{N}(\mathbb{C}^\mathcal{D})$ and we let let $\widehat{\mu_\mathcal{D}}(\Omega)=\mu_\mathcal{D}(\Omega)/\sim$ for the corresponding factor set. We denote by $\widehat{x_\varphi}\in\widehat{\mu_\mathcal{D}}(\Omega)$ the equivalence class of $(x_\varphi)\in\mu_\mathcal{D}(\Omega)$. Also, we denote by $\mathcal{C}^\infty(\widehat{\mu_\mathcal{D}}(\Omega), \widehat{\mathbb{C}^\mathcal{D}})$ the ring of the $\mathcal{C}^\infty$-functions from $\widehat{\mu_\mathcal{D}}(\Omega)$ to $ \widehat{\mathbb{C}^\mathcal{D}}$.

\item\label{I: Restriction} Let $\widehat{f_\varphi}\in\widehat{\mathcal{E}(\Omega)^\mathcal{D}}$ and $\mathcal{O}$ be an open subset of $\Omega$. We define the \textbf{restriction} $\widehat{f_\varphi}\rest\mathcal{O}\in \widehat{\mathcal{E}(\mathcal{O})^\mathcal{D}}$ of $\widehat{f_\varphi}$ on $\mathcal{O}$ by $\widehat{f_\varphi}\rest\mathcal{O}=:\widehat{f_\varphi\rest\mathcal{O}}$, where $f_\varphi\rest\mathcal{O}$ is the usual pointwise restriction of $f_\varphi$ on $\mathcal{O}$. 

\item\label{I: Values} Let $\widehat{f_\varphi}\in\widehat{\mathcal{E}(\Omega)^\mathcal{D}}$. The \textbf{graph} $\widehat{f_\varphi}: \widehat{\mu_\mathcal{D}}(\Omega)\to\widehat{\mathbb{C}^\mathcal{D}}$ is defined by $\widehat{f_\varphi}(\widehat{x_\varphi})=\widehat{f_\varphi(x_\varphi)}$. 
\end{enumerate}

We leave to the reader to verify that $\widehat{\mathbb{R}^\mathcal{D}}, \widehat{\mathbb{C}^\mathcal{D}}, \widehat{\mathcal{E}(\Omega)^\mathcal{D}},\, \widehat{\rho}, (\widehat{\rho})^x$ and $\widehat{\mu_\mathcal{D}}(\Omega)$ satisfy all axioms in Section~\ref{S: Generalized Scalars and Functions in Axioms} if treated as $\widehat{\mathbb{R}}, \widehat{\mathbb{C}}, \widehat{\mathcal{E}(\Omega)}, s, s^x$ and $\widehat{\mu}(\Omega)$, respectively. For more detailed discussion we refer to \cite{TodVern08}. We should note that the axiom $2^\mathfrak{c}=\mathfrak{c}^+$  (known as a {\em generalized continuum hypothesis}) is involved in the proof that $\widehat{\mathbb{R}^\mathcal{D}}$ is Cantor $\mathfrak{c}^+$-complete (\cite{TodVern08}, Theorem 6.3, p.227, and Corollary 7.5, p.229). 
\end{proof}

\section{Inconsistency in Laplace transform theory}
 
	Let $\mathcal{L}$ be the Laplace transform operator and $f(t)$ and $F(z)$ be two generalized functions, i.e. classical functions or Schwartz distributions. In this section we shall treat the equality $\mathcal{L}[f]=F$ as a statement, i.e. a predicate in two variables, $f$ and $F$, which is either true or false. Notice that in this article the active variables are $t$ and $z$ (not the more popular $t$ and $s$). The letter $s$ is preserved for the scale of the field $\widehat{\mathbb{R}}$ (Axiom 5), which means that $s$ is a (fixed) infinitesimal constant (and should be treated in a way similar to the way we treat $\pi$, $e$, etc.)

	The next example (which is rather a counterexample) shows that the formulas $\mathcal{L}[f^{\prime}]=z\mathcal{L}[f]-f(0)$, $\mathcal{L}[f^{\prime\prime}]=z^2\mathcal{L}[f]-z\,f(0)-f^\prime(0)$, $\mathcal{L}[\sin{t}]=\frac{1}{z^2+1}$ and $\mathcal{L}[\delta(t)]=1$, are inconsistent in Laplace transform theory. Consequently, the popular tables of Laplace transform formulas  (see, for example, \cite{jlSchiff99}, p. 209-218) - which include these four formulas - are also inconsistent.

\begin{example}(Counterexample)\label{Ex: Counterexample} We apply the Laplace operator $\mathcal{L}$  to the initial value problem $y^{\prime\prime}+y=\delta(t)$, $y(0_+)=0,\,  y^\prime(0_+)=1$ and the result is $(z^2+1)\mathcal{L}[y]=2$. The latter implies that $y=\mathcal{L}^{-1}\left[\frac{2}{z^2+1}\right]=2\sin{t}$ is a solution of this initial value problem. In particular, it follows that $y^\prime(0_+)=1$, i.e. $2=1$, a contradiction. Notice that the above initial value problem does have a solution, $y=H(t)\sin{(t)}$, in the space of Schwartz distributions $\mathcal{D}^\prime(\mathbb{R})$. That is to say that the statement $(\exists y\in\mathcal{D}^\prime(\mathbb{R}))(y^{\prime\prime}+y=\delta(t)$ and $y(0_+)=0$ and $y^\prime(0_+)=1)$ is true.
\end{example}

	We should notice that logical contradictions and inconsistency in the calculations of the type mentioned in the above example sometimes appear in the work of physicists and engineers - usually disguised behind a complicated terminology of the specific field. They (physicists and engineers) rarely blame mathematics and mathematicians...

	Here are three consistent versions of the Laplace transform theory each using part, but not all, of the formulas from a typical tables of Laplace transform formulas (see, for example, \cite{jlSchiff99}, p. 209-218). 

\begin{remark}(Laplace Transform in Engineer Mathematics). 
 One way to achieve a free of logical contradictions table of Laplace transform formulas while preserving the formulas such as $\mathcal{L}[f^{\prime}]=z\mathcal{L}[f]-f(0)$ and $\mathcal{L}[f^{\prime\prime}]=z^2\mathcal{L}[f]-z\,f(0)-f^\prime(0)$, etc., is to replace the formula $\mathcal{L}[\delta^{(n)}(t)]=z^n$, $n=0,1,2,\dots$, by the formulas $\mathcal{L}[\delta^{(n)}(t-\varepsilon)]= z^ne^{-\varepsilon z}$, $n=0, 1, 2,\dots$, where $\varepsilon\in\mathbb{R}_+$ (not $\varepsilon=0$). In other words, we define the Laplace transform in such a way that $\delta^{(n)}(t)\notin\dom(\mathcal{L})$ and $\delta^{(n)}(t-\varepsilon)\in\dom(\mathcal{L})$. In this approach the formula $\mathcal{L}[\delta(t)]=1$ should be treated as a ``trouble maker'' which should be ``expelled'' from the theory ``for good''. The presence of the formula $\mathcal{L}[\delta(t)]=1$ in mathematics is, perhaps, rooted, in the believe that $H(t)\delta(t)=\delta(t)$. Unfortunately, the product $H(t)\delta(t)$ does not exist in Schwartz theory of distributions. For a discussion we refer the reader to (\cite{jfCol84a}, Chapter 2) or (\cite{mGrosser at al 2}, \S 1.1) or (\cite{rpKanwal}, p. 209-210). This particular branch of Laplace transform theory is very popular among engineers who often treat the Dirac delta function at intuitive level - outside the framework of distribution theory (see Remark~\ref{R: Signal Analysis} at the end of this article). On the other hand, one might hope that the correct solution of the above initial value problem should be obtained by first, solving the initial value problem $y^{\prime\prime}+y=\delta(t-\varepsilon)$, $y(0_+)=0,\,  y^\prime(0_+)=1$ by the method of Laplace transform and then going to a weak limit as $\varepsilon\to 0_+$. Unfortunately, the result is the same as before: The Laplace transform produces $y_\varepsilon(t)=\sin{t}+H(t-\varepsilon)\sin{(t-\varepsilon)}$ and the weak limit produces, again, $y=2\sin{t}$ which does not satisfies $y^\prime(0_+)=1$. The last example, among others, indicates the limitations on this particular branch of Laplace transform theory. 
\end{remark}
\begin{remark}(Laplace Transform in Distribution Theory). In Schwartz theory of distributions the formulas such as $\mathcal{L}[f^{\prime}]=z\mathcal{L}[f]-f(0)$ and $\mathcal{L}[f^{\prime\prime}]=z^2\mathcal{L}[f]-z\,f(0)-f^\prime(0)$ do not have chance to survive since the Schwartz distributions do not have, in general, pointwise values. For example, a formula such as $\mathcal{L}[\delta^{\prime}]=z\mathcal{L}[\delta]-\delta(0)$  does not make sense, since $\delta(0)$ is not well-defined.  Instead, the Laplace transform of a distribution $f\in\mathcal{D}^\prime(\mathbb{R})$ is defined by the formula $\mathcal{L}[f](z)=\mathcal{F}[f](-iz)$, where $\mathcal{F}$ stands for {\em Fourier transform operator}. Here $f\in\dom(\mathcal{L})$ and $z\in \dom(\mathcal{L}[f])$, where $\dom(\mathcal{L})=\{f\in\mathcal{D}^\prime(\mathbb{R}): \supp(f)\subseteq[0, \infty), (\exists a\in\mathbb{R})(e^{-at}f(t)\in\mathcal{S}^\prime(\mathbb{R})\}$ and $ \dom(\mathcal{L}[f])=\{z\in\mathbb{C}: {\rm Re}(z)>\alpha_f \}$, where $\alpha_f=\inf\{a\in\mathbb{R}: e^{-at}\, f(t)\in\mathcal{S}^\prime(\mathbb{R})\}$.  This definition of Laplace transform leads to the formulas $\mathcal{L}[f^{(n)}]=z^n\mathcal{L}[f]$, $n=0, 1, 2,\dots$ for all distributions $f\in\dom(\mathcal{L})$. This explains the origin of the formulas $\mathcal{L}[\delta]=1$ and $\mathcal{L}[\delta^{(n)}]=z^n$, since $\mathcal{F}[\delta]=1$. This version of the Laplace transform theory is logically consistent, because Fourier transform theory is consistent in the framework of distribution theory. However, formulas such as $\mathcal{L}[f^{\prime}]=z\mathcal{L}[f]-f(0_+)$ are not part of this theory regardless whether or not the value $f(0_+)$ exists (except, of course, in the case $f(0_+)=0$). This version of Laplace transform theory admits generalization to distributions in many variables. It should be viewed as a particular case of Fourier transform theory. For more details we refer to (\cite{vVladimirov}, p. 143-151). 
\end{remark}

\begin{remark}(Laplace Transform in Colombeau Theory). The particular brand of Laplace transform theory described above (as a particular case of Fourier transform) was successfully extended to the algebra of Colombeau's tempered generalized functions in \cite{KarunaVijayan93} and in \cite{NedelkovPilipovic95}. Formulas involving pointwise values however, such as $\mathcal{L}[f^{\prime}]=z\mathcal{L}[f]-f(0)$, $\mathcal{L}[f^{\prime\prime}]=z^2\mathcal{L}[f]-z\,f(0)-f^\prime(0)$ and $\mathcal{L}[\delta^{\prime}]=z\mathcal{L}[\delta]-\delta(0)$, are not part of this generalization. Notice that - unlike in distribution theory - the values $f(0), f^\prime(0), \delta(0), \delta^{\prime}(0)$, etc. do make sense in Colombeau theory since Colombeau's generalized functions have pointwise values in the ring of Colombeau's generalized numbers (\cite{jCol85}, \S 2.1). As far as we know however, there have not been attempt so far to reconcile Laplace transform theory involving Schwartz distributions with the pointwise values of generalized functions in the framework of Colombeau's theory. 
\end{remark}
%______________________
\section{Laplace transform in $\widehat{\mathcal{E}(\Omega)}$}
In this section we show the Laplace transform theory is free of contradictions in the framework of the algebra of generalized functions $\widehat{\mathcal{E}(\mathbb{R})}$. Notice that every generalized function $f\in\widehat{\mathcal{E}(\mathbb{R})}$ is a mapping of the form $f: \widehat{\mu}(\mathbb{R})\to \widehat{\mathbb{C}}$ (Axiom 8). That means that all Schwartz distributions in $\mathcal{D}^\prime(\mathbb{R})$, if embedded into $\widehat{\mathcal{E}(\mathbb{R})}$ (Axiom 9), are also mapping of the same type. In particular, the values $f(0), f^\prime(0), \delta(0), \delta^{\prime}(0)$, etc. are always well defined. What follows is not a comprehensive theory (which will be a topic for another article); rather we shall here restrict our discussion only to those tempered generalized functions which belong to $\widehat{\mathcal{E}(\Omega)}$ and which can be treated in the framework of our axiomatic approach. 

	In what follows we denote by $E[0, \infty)$ the set of all functions $f:[0, \infty)\to\mathbb{C}$ whose restriction on $\mathbb{R}_+$ belongs to $\mathcal{L}_{loc}(\mathbb{R}_+)$ with exponential growth at infinity (\cite{gbFolland}, p. 256-257). 

\begin{definition} \begin{enumerate} 
\item We denote by $\dom(\widehat{\mathcal{L}})$ the set of all functions $f\in \widehat{\mathcal{E}(\mathbb{R})}$ which can be presented in the form $f=\sum_{n=1}^\nu \alpha_n\phi_n+\sum_{m=1}^\mu \beta_m \psi_m$ for some $\nu, \mu\in\mathbb{N}$, $\alpha_n, \beta_m\in\widehat{\mathbb{C}}$, $\phi_n\in E[0, \infty)$ and $\psi_m\in\widehat{\mathcal{E}(\mathbb{R})}$, where $\psi_m$ has an external support $\supp(\psi_m)$ which is a compact subset of $[0, \infty)$ and an internal support ${\rm Supp}(\psi_m)$ which is a subset of $[s, \infty)$ (Definition~\ref{D: External and Internal Support}). Here  $s$ stands for the scale of $\widehat{\mathbb{R}}$ (Axiom 5) and $[s, \infty)$ is a short notation for $\{t\in\widehat{\mathbb{R}}: t\geq s\}$.

	\item Let $f\in\dom(\widehat{\mathcal{L}})$.  For every finite $z\in\widehat{\mathbb{C}}$ with a sufficiently large ${\rm Re}(z)$, we define the  Laplace transform $\widehat{\mathcal{L}}(f)(z)$ of $f$  by linearity: (a) $\widehat{\mathcal{L}}(\phi_n)(z)=\sigma_{(\lambda, \infty)}\left(\int_0^\infty \phi_n(t)e^{-zt}\, dt\right)$, where $\int_0^\infty \phi_n(t)e^{-zt}\, dt$ is the usual (classical) Laplace transform of $\phi_n$, defined for all $z\in\mathbb{C}$, such that  ${\rm Re}(z)>\lambda$, for a sufficiently large $\lambda \in\mathbb{R}_+$, and $\sigma_{(\lambda, \infty)}$ is the embedding mentioned in Axiom 7; (b) the integral in $\widehat{\mathcal{L}}(\psi_m)(z)=\int_{-\infty}^\infty \psi_m(t)e^{-zt}\, dt$ (defined for all finite $z\in\widehat{\mathbb{C}}$)  is in the sense of Definition~\ref{D: Integral} and $e^{-zt}$ is a short notation for $\sigma_{\mathbb{R}^3}(e^{-zt})$ (Example~\ref{Ex: Exponents}).
\end{enumerate}
\end{definition}

	We leave the proof of the next result to the reader.
\begin{lemma}(Inverse Laplace).\label{L: Inverse Laplace} Let $f, g\in \dom(\widehat{\mathcal{L}})$. Then $f\cong g$ \ifff $\widehat{\mathcal{L}}(f)=\widehat{\mathcal{L}}(g)$.
\end{lemma}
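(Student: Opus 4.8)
The plan is to use linearity to collapse both implications into one. As $\cong$ and $\widehat{\mathcal{L}}$ are $\widehat{\mathbb{C}}$-linear, putting $h=f-g$ reduces the lemma to the equivalence $h\cong 0\ \ifff\ \widehat{\mathcal{L}}(h)=0$. I would decompose $h=h_{cl}+h_{gen}$ with $h_{cl}=\sum_n\alpha_n\phi_n$ and $h_{gen}=\sum_m\beta_m\psi_m$. The first step is to show that under either hypothesis the classical part is already compactly supported, after which $h$ itself has compact support and the problem becomes symmetric. If $h\cong 0$, testing against $\tau\in\mathcal{D}(\mathbb{R})$ supported in the open complement of $K=\supp(h_{gen})$ gives $\int h_{cl}\tau=(h\,|\,\tau)=0$, so $h_{cl}=0$ a.e. off the compact set $K$. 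In the converse direction I would instead evaluate $\widehat{\mathcal{L}}(h)(z_0)=0$ at standard $z_0$, take standard parts, and apply the classical Laplace uniqueness theorem (Lerch) to $h_{cl}$ together with the distributional shadow of $h_{gen}$ to reach the same conclusion.

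The engine of the proof is a moment expansion. Since $e^{-zt}$ is the $\sigma$-image of an \emph{entire} function of $t$, its Taylor series $\sum_k(-zt)^k/k!$ converges to it on every compact set, and because $\sigma$ is a differential ring homomorphism (Axiom 7) this identity persists in $\widehat{\mathcal{E}(\mathbb{R})}$. Integrating termwise against the now compactly supported $h$ and invoking Theorem~\ref{T: Integral and Pairing}(i), I obtain, with a cut-off $\chi\in\mathcal{E}(\mathbb{R})$ equal to $1$ near the support, the formula $\widehat{\mathcal{L}}(h)(z)=\sum_{k=0}^{\infty}\frac{(-z)^k}{k!}\,(h\,|\,t^k\chi)$. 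The forward implication then falls out immediately: $h\cong 0$ annihilates every moment $(h\,|\,t^k\chi)$, whence $\widehat{\mathcal{L}}(h)=0$.

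For the converse I would read $\widehat{\mathcal{L}}(h)=0$ as the vanishing of an entire $\widehat{\mathbb{C}}$-valued power series in $z$; differentiating repeatedly at $z=0$ (legitimate because compact support makes the transform entire) and using Theorem~\ref{T: Algebraic and Topological Properties}(iv) to justify the termwise operations shows $(h\,|\,t^k\chi)=0$ for every $k$. The remaining task is to promote the vanishing of all polynomial moments to $h\cong 0$. I would do this by Taylor expanding an arbitrary test function about a point $a$ of $K$, writing $\tau(t)=\sum_{j<N}\frac{\tau^{(j)}(a)}{j!}(t-a)^j+R_N(t)$; the polynomial terms are moments and drop out, leaving $(h\,|\,\tau)=(h\,|\,R_N)$ for every $N$.

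The crux, and the step I expect to be hardest, is controlling $(h\,|\,R_N)$ in the sharp topology. The saving feature is that the internal support ${\rm Supp}(h_{gen})$ is \emph{infinitesimal}: by Lemma~\ref{L: Preservation of External Support}(ii) it lies in the monad of the compact pointwise external support, so on it $|t-a|\approx 0$ and $|R_N(t)|\le\frac{\sup|\tau^{(N+1)}|}{(N+1)!}|t-a|^{N+1}$ has valuation at least $(N+1)\,v(s)$, which tends to $\infty$. Thus, although a smooth $\tau$ need not be analytic and $\sup|\tau^{(N+1)}|$ may blow up, that blow-up is only a \emph{standard} (valuation-zero) factor and is swamped by the infinitesimal $|t-a|^{N+1}$; the moderate estimate $|(h\,|\,R_N)|\le\|h_{gen}\|_{L^1}\sup|R_N|$ then tends sharply to $0$, forcing $(h\,|\,\tau)=0$. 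The two loose ends—securing compact support of $h_{cl}$ in the converse (so that the identity principle genuinely applies) and permitting infinitesimal coefficients $\beta_m$, which tame the infinitely large pairings produced by powers such as $\delta^n(t-\lambda)$—are dispatched by the same standard-part and valuation estimates.
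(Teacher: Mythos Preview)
The paper itself supplies no proof of this lemma---it is explicitly left to the reader---so there is nothing to compare against and I can only assess your argument on its own merits.

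Your central device, the moment expansion $\widehat{\mathcal{L}}(h)(z)=\sum_{k\ge 0}\frac{(-z)^k}{k!}\,(h\,|\,t^k\chi)$, is not available in this setting. By Theorem~\ref{T: Algebraic and Topological Properties}(iv) a series in $\widehat{\mathbb{C}}$ converges precisely when the valuations of its terms tend to $\infty$. For a standard nonzero $z$ and a function $h$ whose external support is a macroscopic compact set (say $[1,2]$), the moments $(h\,|\,t^k\chi)$ are ordinary nonzero complex numbers, so every term $\frac{(-z)^k}{k!}(h\,|\,t^k\chi)$ has valuation $0$ and the series \emph{diverges} in $\widehat{\mathbb{C}}$. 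Axiom~7 makes $\sigma_\Omega$ a differential ring homomorphism, which transports only \emph{finite} algebraic identities; no continuity is asserted, so the classical identity $e^{-zt}=\sum_k(-zt)^k/k!$ does not persist in $\widehat{\mathcal{E}(\mathbb{R})}$, and the termwise integration is unjustified.

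The same obstruction recurs in your converse step. You fix a single $a\in K$ and assert $|t-a|\approx 0$ for all $t\in{\rm Supp}(h_{gen})$, appealing to Lemma~\ref{L: Preservation of External Support}(ii). That lemma only says each such $t$ is infinitely close to \emph{some} point of $K$, not to your fixed $a$; the definition of $\dom(\widehat{\mathcal{L}})$ merely requires $\supp(\psi_m)$ to be compact in $[0,\infty)$, not a singleton. If $K=[1,2]$ then $|t-a|$ takes standard positive values, your valuation bound $v(R_N)\ge (N+1)v(s)$ is false, and since $\sup|\tau^{(N)}|$ may grow like $N!$ for a non-analytic $\tau$ there is no reason for $(h\,|\,R_N)\to 0$ sharply. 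Both halves of your moment argument therefore work only when the support of $h$ is already contained in a single monad, which the hypotheses do not grant.
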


\begin{example} \begin{enumerate}\item Let $\delta\in\mathcal{D}^\prime(\Omega)$ denote the Dirac distribution supported at  $\{0\}$ and let $E_\mathbb{R}(\delta)$ be its image in $\widehat{\mathcal{E}(\mathbb{R})}$ under the embedding  $E_\mathbb{R}$. We shall denote this image again by $\delta$. We have $\supp(\delta)=\{0\}$ by Lemma~\ref{L: Preservation of External Support} . However, $\delta\notin\dom(\widehat{\mathcal{L}})$ because ${\rm Supp}(\delta)$ is not a subset of $[s, \infty)$ (Axiom 9, (e)). Thus $\widehat{\mathcal{L}}(\delta)$ is undefined. 
\item In contrast to the above, let $\delta(t-2s)$ stand for $E_\mathbb{R}(\delta)(t-2s)$ (Example~\ref{Ex: Shifted Delta}).  We have $\supp(\delta(t-2s))=\{0\}$ and also ${\rm Supp}(\delta(t-2s))\subseteq [s, 3s]$ which is a subset of $[s, \infty]$. Thus  $\delta(t-2s)\in\dom(\widehat{\mathcal{L}})$.
By direct calculation we derive the formula $\widehat{\mathcal{L}}(\delta(t-2s))=e^{-2sz}$. Similarly, $\widehat{\mathcal{L}}(\delta^{(n)}(t-2s))=z^ne^{-2sz}$.  The function $\delta(t-2s)$ is associated with $\delta$ in the sense that $\int_{-\infty}^\infty \delta(t-2s)\tau(t)\, dt\approx \tau (0)$ for all test functions $\tau\in\mathcal{E}(\mathbb{R})$. We also have $\int_{-\infty}^\infty \delta(t-2s)\, dt=1$.

\item Consider the initial value problem $y^{\prime\prime}+y\cong\delta(t-2s)$, $y(0)=0,  y^\prime(0)=1$ similar, but different from those in Example~\ref{Ex: Counterexample}. Notice that the equality, $=$, in Example~\ref{Ex: Counterexample} has been replaced by the weak inequality, $\cong$ (Definition~\ref{D: Weak Equalities}). We apply the operator $\widehat{\mathcal{L}}$ and (with the help of Lemma~\ref{L: Inverse Laplace}) we obtain $(z^2+1)\widehat{\mathcal{L}}[y]-1= e^{-2sz}$ which implies  $y\cong\widehat{\mathcal{L}}^{-1}\left[\frac{1}{z^2+1}\right] +\widehat{\mathcal{L}}^{-1}\left[\frac{e^{-2sz}}{z^2+1}\right]=\sin{t}+H(t-2s)\sin{(t-2s)}$. The direct calculations show that this generalized function is indeed a solution of the above initial value problem. 

\item Here is another initial value problem: $y^{\prime\prime}+y=\delta(t)$,\; $y(0)=0,\,  y^\prime(0)=0$. Notice that this initial value problem admits a distributional solution $y=\left(H(t)-1\right)\sin{t}$. However, the classical Laplace transform (and the usual tables of Laplace transform formulas) leads again to a contradiction. Indeed, $(z^2+1)\mathcal{L}[y]=1$ implies that $y=\mathcal{L}^{-1}\left[\frac{1}{z^2+1}\right]=\sin{t}$ satisfies the above initial value problem. In particular, $y^\prime(0)=0$ leading to $1=0$, a contradiction. In contrast to the above, let us apply the generalized Laplace operator $\widehat{\mathcal{L}}$ to  the initial value problem $y^{\prime\prime}+y\cong\delta(t-2s)$,\; $y(0)=0,\,  y^\prime(0)=0$.  The result is $y\cong H(t-2s)\sin{(t-2s)}$ which is indeed the solution we are looking for.  
\end{enumerate}  
\end{example}

\begin{remark}(Signal Analysis)\label{R: Signal Analysis}. The delta function $\delta(t-2s)$ - which appears in this article - should be viewed as a theoretical idealization of the delta-net $(\delta_\varepsilon):\mathbb{R}\to\mathbb{R}$, defined by
\begin{equation}
\delta_\varepsilon(t)=
\begin{cases}
 1/2\varepsilon, &\text{\; if\; } 0\leq t<2\varepsilon,\\
0, 	&\text{\; otherwise},
\end{cases}
\end{equation}
where $\varepsilon$ is a ``small parameter''. Notice that this is not a traditional delta-net; it is rather delta-net shifted to the right at a distance $\varepsilon$. Such ``shifted delta-nets'' (instead of $\delta(t-2s)$) appear in the inverse problem in signal analysis, when we sometimes try to find the solution of the initial value problem $Ly^{\prime\prime}+ Ry^\prime+\frac{1}{C}y=f(t)$,\; $y(0_+)=y^\prime(0_+)=0$, without knowing the values of the inductance $L$, resistance $R$ and capacitance $C$ of an electrical circuit. Here $f(t)=E^\prime(t)$, where $E(t)$ stands for the impressed voltage. In these circumstances we often try to find an approximate solution $y_\varepsilon$ of 
$Ly^{\prime\prime}+ Ry^\prime+\frac{1}{C}y=\delta_\varepsilon(t)$,\; $y(0_+)=y^\prime(0_+)=0$ by a physical experiment and then apply convolution $y=y_\varepsilon\star f$. The impressed voltage signal in such physical experiments must be generated of the form
\begin{equation}
E_\varepsilon(t)=
\begin{cases}
0, 	&\text{\; if\; } t< 0,\\
 t/2\varepsilon, &\text{\; if\; } 0\leq t<2\varepsilon,\\
1 &\text{\; if\; } t\geq 2\varepsilon,
\end{cases}
\end{equation}
which is different from the usual Heaviside function $H(t)$. Rather, $E_\varepsilon(t)$ is a ``shifted to the right'' version of $H(t)$.
\end{remark}
\begin{remark}(Laplace Transform Formulas with an Infinitesimal Constant). One way to obtain a table of Laplace transforms free of logical contradiction, is to replace the formulas: $\mathcal{L}(\delta^{(n)})(z)=z^n$ by $\widehat{\mathcal{L}}(\delta^{(n)}(t-2s))=z^ne^{-2sz}$ ($n=0,1,2,\dots$), respectively, where $s$ is a positive infinitesimal constant (which should be treated in a way we treat $\pi$, $e$, etc.). This, of course, will not make mathematical sense unless we also change the framework of the theory as well: that means to replace the spaces of functions $\mathcal{D}^\prime(\mathbb{R})$ and the fields of scalars $\mathbb{R}$ and $\mathbb{C}$ by $\widehat{\mathcal{E}(\mathbb{R})}$, $\widehat{\mathbb{R}}$ and $\widehat{\mathbb{C}}$, respectively. Is all these worth the efforts ? The answer very much depends on the scope of  applications we have in mind. But in any case we believe that the first and most important goals of any mathematical field - with priority over everything else - is to be free of logical contradictions. 
\end{remark}
%___________________________
{\bf Acknowledgment:} {\small The author thanks the anonymous referees for the numerous constructive remarks which improved the quality of the manuscript. The author also thanks to G\"{u}nther H\"{o}rmann for the discussion of some topics of the manuscript.}
%_______________________

%________________
\end{document}